\newtheorem{theorem}{Theorem}
\theoremstyle{plain}
\newtheorem{corollary}{Corollary}
\newtheorem{definition}{Definition}
\newtheorem{lemma}{Lemma}
\newtheorem{proposition}{Proposition}
\numberwithin{equation}{section}
\begin{document}
\title[On Harmonically-Convex Functions]{Some Hermite-Hadamard-Fejer type
inequalities for Harmonically convex functions via Fractional Integral}
\author{\.{I}mdat \.{I}\c{s}can}
\address{Department of Mathematics, Faculty of Arts and Sciences,\\
Giresun University, 28100, Giresun, Turkey.}
\email{imdati@yahoo.com, imdat.iscan@giresun.edu.tr}
\author{Sercan Turhan}
\address{Dereli Vocational High School,\\
Giresun University, 28100, Giresun, Turkey.}
\email{sercanturhan28@gmail.com, sercan.turhan@giresun.edu.tr}
\author{Selahatt\.{I}n Maden}
\address{Department of Mathematics, Faculty of Arts and Sciences,\\
Ordu University, 52000, Ordu, Turkey.}
\email{maden55@mynet.com}
\subjclass[2000]{Primary 26D15; Secondary 26A51; Thirth 26A33}
\keywords{Harmoically-convex, Hermite-Hadamard-Fejer type inequality,
Fractional Integral}

\begin{abstract}
In this paper, we gave the new general identity for differentiable
functions. As a result of this identity some new and general inequalities
for differentiable harmonically-convex functions are obtained.
\end{abstract}

\maketitle

\section{Introduction}

The classical or the usual convexity is defined as follows:

A function $f:I\longrightarrow 
\mathbb{R}
,\ \emptyset \neq I\subseteq 
\mathbb{R}
$, is said to be convex on $I$ if inequality 
\begin{equation*}
f\left( tx+\left( 1-t\right) y\right) \leq tf(x)+\left( 1-t\right) f(y)
\end{equation*}%
holds for all $x,y\in I$ and $t\in \left[ 0,1\right] $.

A number of papers have been written on inequalities using the classical
convexty and one of the most captivating inequalities in mathematical
analysis is stated as follows 
\begin{equation}
f\left(\frac{a+b}{2}\right) \leq \frac{1}{b-a}\int\limits_{a}^{b}f(x)dx\leq 
\frac{f(a)+f(b)}{2}\text{,}  \label{1.1}
\end{equation}
where $f:I\subseteq 
\mathbb{R}
\longrightarrow $ be a convex mapping and $a, b \in I $ with $a \le b$ .
Both the inequalities hold in reversed direction if $f$ is concave. The
inequalities stated in (\ref{1.1}) are known as Hermite-Hadamard
inequalities.

For more results on (\ref{1.1}) which provide new proof, significantly
extensions, generalizations, refinements, counterparts, new
Hermite-Hadamard-type inequalities and numerous applications, we refer the
interested reader to \cite{ZD10, SSD12, IK215, I113, I114, I213,IS15, ML14,
MS12, SSYB13} and the references therein.

The usual notion of convex function have been generalized in diverse
manners. One of them is the so called harmonically s-convex functions and is
stated in the defination below.

\begin{definition}
\label{dfn1}\cite{IK215, I115}Let $I\subset \left( 0,\infty \right) $ be a
real interval. A function $f:I\longrightarrow 
\mathbb{R}
$ is said to be harmonically s-convex(concave), if%
\begin{equation*}
f\left( \frac{xy}{tx+(1-t)y}\right) \leq \left( \geq \right) t^{s}f\left(
y\right) +\left( 1-t\right) ^{s}f\left( x\right)
\end{equation*}%
holds for all $x,y\in I$ and $t\in \left[ 0,1\right] $, and for some fixed $%
s\in \left( 0,1\right] .$
\end{definition}

It can be easily seen that for $s=1$ in Defination 1 reduces to following
Defination 2:

\begin{definition}
\label{dfn2}\cite{I113} A function $f:I\subseteq 
\mathbb{R}
\backslash \{0\}\longrightarrow 
\mathbb{R}
$ is said to be harmonically-convex function$,$ if 
\begin{equation*}
f\left( \frac{xy}{tx+(1-t)y}\right) \leq tf\left( y\right) +\left(
1-t\right) f\left( x\right)
\end{equation*}%
holds for all $x,y\in I$ and $t\in \left[ 0,1\right] $ . If the inequality
is reversed, then $f$ is said to be harmonically concave.
\end{definition}

\begin{proposition}
\cite{I113} Let I $\subset $ $%
\mathbb{R}
\backslash \{0\}$ be a real interval and $f:I\rightarrow 
\mathbb{R}
$ is function, then:
\end{proposition}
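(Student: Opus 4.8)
The plan is to reduce all parts of the proposition to a single elementary identity. For $x,y\in I$ and $t\in[0,1]$ put $H_{t}(x,y)=\frac{xy}{tx+(1-t)y}$ (the argument appearing in Definition~\ref{dfn2}) and $A_{t}(x,y)=(1-t)x+ty$, the ordinary convex combination with the weights matched to the way $t$ and $1-t$ enter harmonic convexity. First I would establish the identity
\[
A_{t}(x,y)-H_{t}(x,y)=(1-t)x+ty-\frac{xy}{tx+(1-t)y}=\frac{t(1-t)(x-y)^{2}}{tx+(1-t)y},
\]
obtained by putting the left-hand side over the common denominator $tx+(1-t)y$; the numerator collapses to $t(1-t)(x-y)^{2}$ once the cross terms cancel.

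The second step is to read off signs. The denominator $tx+(1-t)y$ is itself a convex combination of $x$ and $y$, hence lies in $I$; it is therefore positive when $I\subset(0,\infty)$ and negative when $I\subset(-\infty,0)$, while the numerator $t(1-t)(x-y)^{2}$ is always nonnegative. Consequently $H_{t}(x,y)\le A_{t}(x,y)$ on a positive interval and $H_{t}(x,y)\ge A_{t}(x,y)$ on a negative interval.

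Each of the four assertions then follows by composing two one-line estimates. For example, if $I\subset(0,\infty)$ and $f$ is convex and nondecreasing, monotonicity gives $f\!\left(H_{t}(x,y)\right)\le f\!\left(A_{t}(x,y)\right)$ and convexity gives $f\!\left(A_{t}(x,y)\right)=f\!\left((1-t)x+ty\right)\le(1-t)f(x)+tf(y)$, whose composition is precisely the inequality of Definition~\ref{dfn2}. If instead $f$ is harmonically convex and nonincreasing on $I\subset(0,\infty)$, then the same comparison $H_{t}\le A_{t}$ with the reversed monotonicity yields $f\!\left((1-t)x+ty\right)\le f\!\left(H_{t}(x,y)\right)\le tf(y)+(1-t)f(x)$, i.e.\ $f$ is convex. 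The two cases with $I\subset(-\infty,0)$ are handled verbatim, the only change being that now $H_{t}\ge A_{t}$, which is exactly why the monotonicity hypotheses are swapped relative to the positive case.

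I do not expect a real obstacle here. The one thing to watch is the bookkeeping: one must track the direction of $H_{t}$ versus $A_{t}$ (which depends on the sign of $I$) together with the direction of monotonicity of $f$, and must remember that harmonic convexity as stated in Definition~\ref{dfn2} pairs $t$ with $f(y)$ and $1-t$ with $f(x)$, so the ordinary convex combination that appears in the comparison is $(1-t)x+ty$ rather than $tx+(1-t)y$.
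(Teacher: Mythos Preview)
Your argument is correct. The identity $A_t(x,y)-H_t(x,y)=\dfrac{t(1-t)(x-y)^2}{tx+(1-t)y}$ is right, the sign analysis of the denominator on the two half-lines is right, and each of the four implications follows exactly as you describe by composing the monotonicity of $f$ with either ordinary or harmonic convexity. The bookkeeping about which weight attaches to which value is handled properly.

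As for comparison with the paper: there is nothing to compare against. The proposition is quoted from \cite{I113} and the present paper gives no proof at all; it is listed as background. Your write-up is a complete and self-contained proof, and in fact this harmonic--arithmetic comparison is the standard route (and essentially the one used in \cite{I113}). The only cosmetic point is that you might also note explicitly that $H_t(x,y)\in I$ (since $1/H_t=t/y+(1-t)/x$ lies between $1/x$ and $1/y$, hence $H_t$ lies between $x$ and $y$), so that applying $f$ at $H_t$ is legitimate; you used this implicitly.
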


if $I\subset (0,\infty )$ and $f$ is convex and nondecreasing function then
f is harmonically convex.

if $I\subset (0,\infty )$ and $f$ is harmonically convex and nonincreasing
function then f is convex.

if $I\subset (-\infty ,0)$ and $f$ is harmonically convex and nondecreasing
function then f is convex.

if $I\subset (-\infty ,0)$ and $f$ is convex and nonincreasing function then
f is harmonically convex.

For the properties of harmonically-convex functions and
harmonically-s-convex function, we refer the reader to \cite{CW141, IK215,
I113, I115, I114, IK15, IW14} and the reference there in.

Most recently, a number of findings have been seen on Hermite-Hadamard type
integral inequalities for harmonically-convex and for harmonically-s-convex
functions.

In \cite{LF06}, Fej\'{e}r established the following Fej\'{e}r inequality
which is the weighted generalization of Hermite-Hadamard inequality (\ref%
{1.1}):

\begin{theorem}
Let $f:[a,b]\longrightarrow 
\mathbb{R}
$ be convex function. Then the inequality
\end{theorem}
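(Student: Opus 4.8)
The statement to be proved is the classical Fej\'er inequality: for convex $f:[a,b]\to\mathbb{R}$ and a nonnegative, integrable weight $g:[a,b]\to\mathbb{R}$ that is symmetric about the midpoint, i.e.\ $g(a+b-x)=g(x)$ for all $x\in[a,b]$, one has $f\!\left(\frac{a+b}{2}\right)\int_a^b g(x)\,dx\le \int_a^b f(x)g(x)\,dx\le \frac{f(a)+f(b)}{2}\int_a^b g(x)\,dx$. The plan is to establish the two inequalities separately, in both cases combining a pointwise convexity estimate with integration against the nonnegative weight. The key auxiliary observation, obtained by the change of variable $x\mapsto a+b-x$ and the symmetry of $g$, is that for any integrable $h$ one has $\int_a^b h(x)g(x)\,dx=\int_a^b h(a+b-x)g(x)\,dx$; taking $h(x)=\frac{b-x}{b-a}$ gives $\int_a^b \frac{b-x}{b-a}\,g(x)\,dx=\int_a^b\frac{x-a}{b-a}\,g(x)\,dx=\frac12\int_a^b g(x)\,dx$, which I will use repeatedly.

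For the right-hand inequality, I would write each $x\in[a,b]$ as the convex combination $x=ta+(1-t)b$ with $t=\frac{b-x}{b-a}\in[0,1]$, so that convexity of $f$ gives $f(x)\le \frac{b-x}{b-a}f(a)+\frac{x-a}{b-a}f(b)$. Multiplying both sides by $g(x)\ge 0$ (which preserves the inequality) and integrating over $[a,b]$, the two integrals on the right each equal $\frac12\int_a^b g(x)\,dx$ by the auxiliary observation, yielding $\int_a^b f(x)g(x)\,dx\le \frac{f(a)+f(b)}{2}\int_a^b g(x)\,dx$.

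For the left-hand inequality, I would first symmetrize: using $x\mapsto a+b-x$ together with $g(a+b-x)=g(x)$, one gets $\int_a^b f(x)g(x)\,dx=\frac12\int_a^b\bigl[f(x)+f(a+b-x)\bigr]g(x)\,dx$. Since $\frac{x+(a+b-x)}{2}=\frac{a+b}{2}$, convexity of $f$ gives $f\!\left(\frac{a+b}{2}\right)\le \frac{f(x)+f(a+b-x)}{2}$ for every $x\in[a,b]$; multiplying by $g(x)\ge 0$ and integrating produces $f\!\left(\frac{a+b}{2}\right)\int_a^b g(x)\,dx\le \int_a^b f(x)g(x)\,dx$.

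I do not expect a genuine obstacle here: the argument is a routine pairing of the convexity inequality with the weight, and the only point demanding care is the bookkeeping in the substitution $x\mapsto a+b-x$ and the repeated replacement of weighted integrals of $\frac{b-x}{b-a}$ by $\frac12\int_a^b g$, all of which rely on both the symmetry and the nonnegativity of $g$. As a consistency check one notes that $g\equiv 1$ recovers the Hermite--Hadamard inequality \eqref{1.1}.
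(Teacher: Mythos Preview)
Your argument is correct and complete: the two halves of the Fej\'er inequality are obtained exactly as you outline, by combining the pointwise convexity bounds $f(x)\le \frac{b-x}{b-a}f(a)+\frac{x-a}{b-a}f(b)$ and $f\!\left(\frac{a+b}{2}\right)\le \frac{f(x)+f(a+b-x)}{2}$ with integration against the nonnegative symmetric weight, using the substitution $x\mapsto a+b-x$.

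There is nothing to compare against, however, because the paper does not supply its own proof of this theorem. It is quoted from the literature (Fej\'er, \cite{LF06}) as background, and the paper's original contributions begin only in Section~2. Your proof is therefore a correct, self-contained supplement to a result the paper merely cites.
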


\begin{equation}
f\left( \frac{a+b}{2}\right) \int\limits_{a}^{b}g(x)dx\leq
\int\limits_{a}^{b}f(x)g(x)dx\leq \frac{f(a)+f(b)}{2}\int%
\limits_{a}^{b}g(x)dx\text{,}  \label{1.2}
\end{equation}

holds, where $g:[a,b]\longrightarrow 
\mathbb{R}
$ is nonnegative, integrable and symmetric to $(a+b)/2$.

For some results which generalize, improve, and extend the inequalities (\ref%
{1.1}) and (\ref{1.2}) see \cite{MS12}.

In \cite{I113}, \.{I}\c{s}can gave defination of harmonically convex
functions and established following Hermite- Hadamard type inequality for
harmonically convex functions as follows:

\begin{theorem}
\cite{MS12} Let $f:I\subset \mathbb{R}\backslash \{0\}\mathbb{\rightarrow R}$
be a harmonically convex function and $a,b\in I$ with $a<b$ . If $\ f\in L%
\left[ a,b\right] $ then the following inequalities hold:%
\begin{equation}
f\left( \frac{2ab}{a+b}\right) \leq \frac{ab}{b-a}\int\limits_{a}^{b}\frac{%
f(x)}{x^{2}}dx\leq \frac{f(a)+f(b)}{2}.  \label{1.3}
\end{equation}

In \cite{IW14}, \.{I}\c{s}can and Wu represented Hermite-Hadamard's
inequalities for harmonically convex functions in fractional integral form
as follows:
\end{theorem}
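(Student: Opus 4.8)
The plan is to obtain both inequalities in (\ref{1.3}) directly from the defining inequality of harmonic convexity (Definition \ref{dfn2}) specialized to the endpoints $a,b$, together with a single change of variables that converts integrals in the parameter $t$ into the weighted integral $\frac{ab}{b-a}\int_a^b f(x)/x^2\,dx$. Note first that since $I\subset\mathbb{R}\setminus\{0\}$ is an interval and $a,b\in I$, the whole segment $[a,b]$ lies either in $(0,\infty)$ or in $(-\infty,0)$; hence $1/x^2$ is bounded on $[a,b]$ and, because $f\in L[a,b]$, every integral below is finite.

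For the right-hand inequality I would put $x=a$ and $y=b$ in Definition \ref{dfn2} to get $f\!\left(\frac{ab}{ta+(1-t)b}\right)\le t f(b)+(1-t)f(a)$ for all $t\in[0,1]$, and then integrate in $t$ over $[0,1]$; the right side integrates to $\frac{f(a)+f(b)}{2}$. For the left side I would use the substitution $u=\frac{ab}{ta+(1-t)b}$, so that $t=\frac{b(u-a)}{u(b-a)}$ and $dt=\frac{ab}{(b-a)u^2}\,du$, with $u$ ranging monotonically from $a$ to $b$; this gives $\int_0^1 f\!\left(\frac{ab}{ta+(1-t)b}\right)dt=\frac{ab}{b-a}\int_a^b\frac{f(u)}{u^2}\,du$, which is precisely the middle term of (\ref{1.3}).

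For the left-hand inequality I would use harmonic convexity with $t=\tfrac12$, i.e. $f\!\left(\frac{2XY}{X+Y}\right)\le\frac{f(X)+f(Y)}{2}$ for $X,Y\in I$, applied to $X=\frac{ab}{ta+(1-t)b}$ and $Y=\frac{ab}{tb+(1-t)a}$. A direct computation gives $\frac1X+\frac1Y=\frac{a+b}{ab}$, so $\frac{2XY}{X+Y}=\frac{2ab}{a+b}$ for every $t$; therefore $f\!\left(\frac{2ab}{a+b}\right)\le\frac12\left[f\!\left(\frac{ab}{ta+(1-t)b}\right)+f\!\left(\frac{ab}{tb+(1-t)a}\right)\right]$ for all $t\in[0,1]$. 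Integrating in $t$ over $[0,1]$ and applying the substitution above to the first integral and the analogous substitution $v=\frac{ab}{tb+(1-t)a}$ to the second (which again sweeps $[a,b]$), both integrals reduce to $\frac{ab}{b-a}\int_a^b\frac{f(u)}{u^2}\,du$, and the left inequality of (\ref{1.3}) follows.

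The only point demanding care is the bookkeeping in the changes of variables — the orientation of the limits, the sign of $b-a$, and the fact that $t\mapsto u$ is a genuine increasing bijection of $[0,1]$ onto $[a,b]$. Once this is arranged cleanly, the rest is a routine substitution together with the monotonicity of the integral, so I do not anticipate a genuine obstacle.
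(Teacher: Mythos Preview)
Your argument is correct and is in fact the standard proof of this result. Note, however, that in the present paper this theorem is quoted in the Introduction as a known result from \cite{I113} (the citation \cite{MS12} in the statement appears to be a misprint), and no proof is given here; so there is no ``paper's own proof'' to compare against. What you have written is essentially the original proof of \.{I}\c{s}can: apply Definition~\ref{dfn2} at the endpoints $x=a$, $y=b$, integrate in $t$, and use the change of variable $u=\dfrac{ab}{ta+(1-t)b}$ (equivalently $\dfrac{1}{u}=\dfrac{t}{b}+\dfrac{1-t}{a}$) to identify $\int_0^1 f\!\left(\dfrac{ab}{ta+(1-t)b}\right)dt$ with $\dfrac{ab}{b-a}\int_a^b \dfrac{f(u)}{u^2}\,du$; the left inequality then follows from the midpoint case $t=\tfrac12$ applied to the symmetric pair $X,Y$ exactly as you describe. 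Your remarks on integrability and on the monotone bijection $t\mapsto u$ are the only points needing care, and you have handled them.
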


\begin{theorem}
\cite{IW14} Let $f:I\subseteq \mathbb{R}^{+}\mathbb{\rightarrow R}$ be a
function such that $f\in L\left[ a,b\right] $, where $a,b\in I$ with $a<b.$
If $f$ is harmonically-convex on $\left[ a,b\right] $ , then the following
inequalities for fractional integrals hold: 
\begin{equation}
f\left( \frac{2ab}{a+b}\right) \leq \frac{\Gamma (\alpha +1)}{2}\left( \frac{%
ab}{b-a}\right) ^{\alpha }\left\{ 
\begin{array}{c}
J_{1/a^{-}}^{\alpha }(f\circ h)(1/b) \\ 
+J_{1/b^{+}}^{\alpha }(f\circ h)(1/a)%
\end{array}%
\right\}  \label{1.4}
\end{equation}%
\begin{equation*}
\leq \frac{f(a)+f(b)}{2}.
\end{equation*}

with $\alpha >0$ and $h(x)=1/x$.
\end{theorem}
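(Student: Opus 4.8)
The plan is to run the classical proof of the Hermite--Hadamard inequality with the constant weight replaced by the fractional kernel $t^{\alpha-1}$, and then to recognize the two resulting integrals as Riemann--Liouville fractional integrals of $g:=f\circ h$, where $g(u)=f(1/u)$. Recall that, for $u<v$,
\begin{equation*}
J_{v^{-}}^{\alpha }g(u)=\frac{1}{\Gamma (\alpha )}\int_{u}^{v}(t-u)^{\alpha -1}g(t)\,dt,\qquad J_{u^{+}}^{\alpha }g(v)=\frac{1}{\Gamma (\alpha )}\int_{u}^{v}(v-t)^{\alpha -1}g(t)\,dt .
\end{equation*}
As a preliminary step I would note that $g\in L[1/b,1/a]$: the substitution $x=1/u$ gives $\int_{1/b}^{1/a}|g(u)|\,du=\int_{a}^{b}|f(x)|x^{-2}\,dx\le a^{-2}\|f\|_{L[a,b]}<\infty$, so the two fractional integrals on the right of (\ref{1.4}) are well defined.

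For the second (right-hand) inequality I would apply Definition \ref{dfn2} twice for $t\in[0,1]$: with $(x,y)=(a,b)$ it gives $f\!\left(\tfrac{ab}{ta+(1-t)b}\right)\le tf(b)+(1-t)f(a)$, and with $(x,y)=(b,a)$ it gives $f\!\left(\tfrac{ab}{tb+(1-t)a}\right)\le tf(a)+(1-t)f(b)$. Adding these two, multiplying by $t^{\alpha-1}\ge 0$ and integrating over $[0,1]$, the right-hand side collapses to $\int_{0}^{1}t^{\alpha-1}\,dt\,(f(a)+f(b))=\frac{f(a)+f(b)}{\alpha}$. In each integral on the left I would substitute $u=1/x$: for $x=\frac{ab}{tb+(1-t)a}$ one has $u=\frac1b+t(\frac1a-\frac1b)$, hence $t=\frac{ab}{b-a}\left(u-\frac1b\right)$ and $dt=\frac{ab}{b-a}\,du$, which converts that integral into $\left(\frac{ab}{b-a}\right)^{\alpha}\Gamma(\alpha)\,J_{1/a^{-}}^{\alpha}g(1/b)$; the substitution $x=\frac{ab}{ta+(1-t)b}$, which reverses orientation, produces similarly $\left(\frac{ab}{b-a}\right)^{\alpha}\Gamma(\alpha)\,J_{1/b^{+}}^{\alpha}g(1/a)$. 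Multiplying the resulting inequality by $\frac{\alpha}{2}$ and using $\alpha\Gamma(\alpha)=\Gamma(\alpha+1)$ gives the second inequality in (\ref{1.4}).

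For the first (left-hand) inequality I would exploit the identity $\frac{1}{u(t)}+\frac{1}{v(t)}=\frac{a+b}{ab}$, valid for every $t$, where $u(t)=\frac{ab}{ta+(1-t)b}$ and $v(t)=\frac{ab}{tb+(1-t)a}$ (both of which lie in $[a,b]\subseteq I$). This says exactly that $\frac{2ab}{a+b}$ equals $\frac{u(t)v(t)}{\frac12 u(t)+\frac12 v(t)}$, so applying Definition \ref{dfn2} to the pair $(u(t),v(t))$ with the parameter equal to $\tfrac12$ yields $2f\!\left(\frac{2ab}{a+b}\right)\le f(u(t))+f(v(t))$ for all $t\in[0,1]$. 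Multiplying by $t^{\alpha-1}$, integrating over $[0,1]$ and applying the same two substitutions as above gives $\frac{2}{\alpha}f\!\left(\frac{2ab}{a+b}\right)\le\left(\frac{ab}{b-a}\right)^{\alpha}\Gamma(\alpha)\left(J_{1/a^{-}}^{\alpha}g(1/b)+J_{1/b^{+}}^{\alpha}g(1/a)\right)$, and multiplying by $\frac{\alpha}{2}$ finishes the proof.

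The genuinely delicate part is the change of variables. One has to check that $u=1/x$ applied to $x=\frac{ab}{tb+(1-t)a}$ really produces the kernel $(u-\frac1b)^{\alpha-1}$ (hence the \emph{left}-sided operator $J_{1/a^{-}}^{\alpha}$ evaluated at $1/b$), whereas $x=\frac{ab}{ta+(1-t)b}$ produces $(\frac1a-u)^{\alpha-1}$ (hence the \emph{right}-sided operator $J_{1/b^{+}}^{\alpha}$ evaluated at $1/a$), and that in both cases the Jacobian together with the factor $t^{\alpha-1}$ contributes precisely $\left(\frac{ab}{b-a}\right)^{\alpha}$. Keeping the integration limits, the orientation of each substitution, and the $v^{-}$ versus $v^{+}$ conventions mutually consistent is where essentially all the care is needed; the mild blow-up of $t^{\alpha-1}$ at $0$ when $0<\alpha<1$ causes no trouble since the estimates being integrated are pointwise, and once the two integral identities are in place the rest is bookkeeping with $\Gamma(\alpha+1)=\alpha\Gamma(\alpha)$.
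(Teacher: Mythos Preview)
Your proof is correct. The paper does not actually prove this theorem; it is quoted from \cite{IW14} as background in the Introduction, so there is no in-paper argument to compare against. Your approach---applying harmonic convexity pointwise with the pair $(a,b)$ and the swapped pair $(b,a)$, weighting by $t^{\alpha-1}$, integrating, and then recognising the two integrals via the substitution $u=1/x$ as the Riemann--Liouville operators $J_{1/a^{-}}^{\alpha}(f\circ h)(1/b)$ and $J_{1/b^{+}}^{\alpha}(f\circ h)(1/a)$---is exactly the standard proof from \cite{IW14}, and your handling of the left inequality via the identity $\frac{1}{u(t)}+\frac{1}{v(t)}=\frac{a+b}{ab}$ together with harmonic convexity at parameter $\tfrac12$ is likewise the expected argument. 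The substitution bookkeeping you flagged as delicate is carried out correctly.
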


\begin{definition}
A function $g:\left[ a,b\right] \subseteq 
\mathbb{R}
\backslash \{0\}\longrightarrow 
\mathbb{R}
$ is said to be harmonically symmetric with respect to $2ab/a+b$ if%
\begin{equation}
g(x)=g\left( \frac{1}{\frac{1}{a}+\frac{1}{b}-\frac{1}{x}}\right)
\label{1.5}
\end{equation}
\end{definition}

holds for all $x\in \lbrack a,b].$

In \cite{CW141} Chan and Wu represented Hermite-Hadamard-Fejer inequality
for harmonically convex functions as follows:

\begin{theorem}
Suppose that $f:I\subseteq 
\mathbb{R}
\backslash \{0\}\longrightarrow 
\mathbb{R}
$ be harmonically-convex function and $a,b\in I$, with $a<b$. If $f\in L%
\left[ a,b\right] $ and $g:[a,b]\subseteq 
\mathbb{R}
\backslash \{0\}\longrightarrow 
\mathbb{R}
$ is nonnegative, integrable and harmonically symmetric with respect to $%
2ab/a+b,$then 
\begin{equation}
f\left( \frac{2ab}{a+b}\right) \int\limits_{a}^{b}\frac{g(x)}{x^{2}}dx\leq
\int\limits_{a}^{b}\frac{f(x)g(x)}{x^{2}}dx\leq \frac{f(a)+f(b)}{2}%
\int\limits_{a}^{b}\frac{g(x)}{x^{2}}dx  \label{1.6}
\end{equation}%
In \cite{IK15} \.{I}\c{s}can and Kunt represented Hermite-Hadamard-Fejer
type inequality for harmonically convex functions in fractional integral
forms and established following identity as follows:
\end{theorem}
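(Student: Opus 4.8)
The plan is to turn the weighted inequality (\ref{1.6}) into an ordinary averaging inequality on $[0,1]$ by means of the harmonic change of variables, and then to run the classical Fej\'{e}r argument, with the symmetry of the weight replaced by harmonic symmetry.

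First I would substitute $\frac{1}{x}=\frac{t}{a}+\frac{1-t}{b}$, that is $x=\varphi(t):=\frac{ab}{tb+(1-t)a}$, which is a decreasing bijection of $[0,1]$ onto $[a,b]$ with $\varphi(0)=b$ and $\varphi(1)=a$. A direct differentiation gives $\frac{dx}{x^{2}}=-\frac{b-a}{ab}\,dt$, hence for every integrable $h$ on $[a,b]$
\[
\int_{a}^{b}\frac{h(x)}{x^{2}}\,dx=\frac{b-a}{ab}\int_{0}^{1}h(\varphi(t))\,dt .
\]
Applying this with $h=g$ and with $h=fg$ rewrites all three members of (\ref{1.6}) as $\frac{b-a}{ab}$ times integrals over $[0,1]$, and the positive factor $\frac{b-a}{ab}$ cancels, so it suffices to prove the resulting inequality for $G(t):=g(\varphi(t))\ge 0$ and for $f\circ\varphi$. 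At this point I would record the elementary identities $\frac{1}{\varphi(t)}+\frac{1}{\varphi(1-t)}=\frac{1}{a}+\frac{1}{b}$, $\ \frac{1}{a}+\frac{1}{b}-\frac{1}{\varphi(t)}=\frac{1}{\varphi(1-t)}$, $\ \varphi(\tfrac12)=\frac{2ab}{a+b}$ and $\frac{2\varphi(t)\varphi(1-t)}{\varphi(t)+\varphi(1-t)}=\frac{2ab}{a+b}$; the second of these, combined with the harmonic symmetry (\ref{1.5}) of $g$, yields $G(t)=G(1-t)$ on $[0,1]$.

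For the right-hand inequality I would use Definition \ref{dfn2} with the endpoints $a,b$ to get $f(\varphi(t))\le t f(a)+(1-t) f(b)$, multiply by $G(t)\ge0$, integrate over $[0,1]$, and then use the consequence $\int_{0}^{1}t\,G(t)\,dt=\int_{0}^{1}(1-t)\,G(t)\,dt=\tfrac12\int_{0}^{1}G(t)\,dt$ of $G(t)=G(1-t)$ to arrive at $\int_{0}^{1}f(\varphi(t))\,G(t)\,dt\le\frac{f(a)+f(b)}{2}\int_{0}^{1}G(t)\,dt$. For the left-hand inequality I would instead apply Definition \ref{dfn2} to the pair of points $\varphi(t),\varphi(1-t)$ with parameter $\tfrac12$; by the last identity above this reads $f\!\left(\frac{2ab}{a+b}\right)\le\tfrac12\bigl(f(\varphi(t))+f(\varphi(1-t))\bigr)$. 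Multiplying by $G(t)\ge0$, integrating over $[0,1]$, and then using the reflection $s=1-t$ together with $G(t)=G(1-t)$ to identify $\int_{0}^{1}f(\varphi(1-t))G(t)\,dt$ with $\int_{0}^{1}f(\varphi(t))G(t)\,dt$ collapses the right side to $\int_{0}^{1}f(\varphi(t))G(t)\,dt$, giving $f\!\left(\frac{2ab}{a+b}\right)\int_{0}^{1}G(t)\,dt\le\int_{0}^{1}f(\varphi(t))G(t)\,dt$. Undoing the change of variables then yields (\ref{1.6}).

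All the computations are elementary; the only delicate point is the bookkeeping with the symmetric weight — in the left-hand inequality one must apply $G(t)=G(1-t)$ and the reflection $s=1-t$ to the correct factor so that the mixed term folds back onto $\int_{0}^{1}(f\circ\varphi)\,G$ rather than producing a spurious term, and in the right-hand inequality one must keep track of which endpoint carries which weight. I expect that folding step to be the main (and really quite mild) obstacle; verifying $\frac{dx}{x^{2}}=-\frac{b-a}{ab}\,dt$, $\ \varphi(\tfrac12)=\frac{2ab}{a+b}$ and the harmonic-symmetry reformulation $G(t)=G(1-t)$ is routine.
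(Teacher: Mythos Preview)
Your argument is correct: the harmonic change of variables $x=\varphi(t)=\frac{ab}{tb+(1-t)a}$ reduces (\ref{1.6}) to the standard Fej\'er scheme on $[0,1]$, the harmonic symmetry of $g$ becomes $G(t)=G(1-t)$, and both inequalities follow exactly as you describe. Note, however, that the paper does not give its own proof of this statement; it is quoted in the Introduction as a known result due to Chen and Wu \cite{CW141}, so there is no in-paper proof to compare against.
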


\begin{theorem}
Let $f:[a,b]\longrightarrow 
\mathbb{R}
$ be harmonically convex function with $a<b$ and $f\in L\left[ a,b\right] $.
If $g:[a,b]\longrightarrow 
\mathbb{R}
$ is nonnegative, integrable and harmonically symmetric with respect to $%
2ab/a+b,$then the following inequalities for fractional integrals hold: 
\begin{equation}
f\left( \frac{2ab}{a+b}\right) \left[ J_{1/a^{-}}^{\alpha }(g\circ
h)(1/b)+J_{1/b^{+}}^{\alpha }(g\circ h)(1/a)\right]   \label{1.7}
\end{equation}%
\begin{equation*}
\leq \left[ J_{1/a^{-}}^{\alpha }(fg\circ h)(1/b)+J_{1/b^{+}}^{\alpha
}(fg\circ h)(1/a)\right] 
\end{equation*}%
\begin{equation*}
\leq \frac{f(a)+f(b)}{2}\left[ J_{1/a^{-}}^{\alpha }(g\circ
h)(1/b)+J_{1/b^{+}}^{\alpha }(g\circ h)(1/a)\right] 
\end{equation*}
\end{theorem}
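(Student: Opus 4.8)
The plan is to collapse both fractional-integral brackets to ordinary integrals over $[0,1]$ via the harmonic change of variable, and then to play the symmetry of $g$ against the harmonic convexity of $f$, exactly as in the classical Fej\'{e}r argument transported through the map $x\mapsto 1/x$. Write $x_{t}=\frac{ab}{tb+(1-t)a}$ for $t\in[0,1]$, equivalently $\frac{1}{x_{t}}=\frac{t}{a}+\frac{1-t}{b}$, so $x_{t}$ traverses $[a,b]$. Substituting $s=\frac{t}{a}+\frac{1-t}{b}$ into $J_{1/a^{-}}^{\alpha }F(1/b)=\frac{1}{\Gamma(\alpha)}\int_{1/b}^{1/a}(s-\tfrac1b)^{\alpha-1}F(s)\,ds$ and $J_{1/b^{+}}^{\alpha }F(1/a)=\frac{1}{\Gamma(\alpha)}\int_{1/b}^{1/a}(\tfrac1a-s)^{\alpha-1}F(s)\,ds$ (so that $ds=\tfrac{b-a}{ab}\,dt$, $s-\tfrac1b=t\,\tfrac{b-a}{ab}$, and $\tfrac1a-s=(1-t)\,\tfrac{b-a}{ab}$), one obtains for every integrable $\varphi$ on $[a,b]$
\begin{equation*}
J_{1/a^{-}}^{\alpha }(\varphi \circ h)(1/b)+J_{1/b^{+}}^{\alpha }(\varphi \circ h)(1/a)=\frac{1}{\Gamma (\alpha )}\Bigl(\frac{b-a}{ab}\Bigr)^{\alpha }\int_{0}^{1}\bigl[t^{\alpha -1}+(1-t)^{\alpha -1}\bigr]\varphi (x_{t})\,dt .
\end{equation*}
Applying this with $\varphi=g$ and with $\varphi=fg$ shows that all three members of the asserted chain carry the common positive factor $\frac{1}{\Gamma(\alpha)}\bigl(\tfrac{b-a}{ab}\bigr)^{\alpha}$, so it suffices to prove the chain with each bracket replaced by $\int_{0}^{1}[t^{\alpha-1}+(1-t)^{\alpha-1}]\varphi(x_{t})\,dt$.

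The single structural fact needed is $\frac{1}{x_{t}}+\frac{1}{x_{1-t}}=\frac1a+\frac1b$, i.e.\ $x_{1-t}=\bigl(\tfrac1a+\tfrac1b-\tfrac1{x_{t}}\bigr)^{-1}$; hence harmonic symmetry of $g$ about $2ab/(a+b)$ means precisely $g(x_{t})=g(x_{1-t})$ for all $t\in[0,1]$, so $t\mapsto [t^{\alpha-1}+(1-t)^{\alpha-1}]\,g(x_{t})$ is symmetric about $t=\tfrac12$. For the left inequality, since $\frac{2ab}{a+b}=\frac{2x_{t}x_{1-t}}{x_{t}+x_{1-t}}$, harmonic convexity of $f$ with weight $\tfrac12$ gives $f\bigl(\tfrac{2ab}{a+b}\bigr)\le \tfrac12 f(x_{t})+\tfrac12 f(x_{1-t})$. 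Multiplying by the nonnegative weight $[t^{\alpha-1}+(1-t)^{\alpha-1}]g(x_{t})$, integrating over $[0,1]$, and then changing $t\mapsto 1-t$ in the integral carrying $f(x_{1-t})$ (legitimate because $g(x_{1-t})=g(x_{t})$ and the kernel is $t\leftrightarrow 1-t$ invariant), the two integrals on the right become equal, with common value $\int_{0}^{1}[t^{\alpha-1}+(1-t)^{\alpha-1}]f(x_{t})g(x_{t})\,dt$; this is the left inequality.

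For the right inequality, $\frac{1}{x_{t}}=\frac{t}{a}+\frac{1-t}{b}$ together with harmonic convexity yields $f(x_{t})\le (1-t)f(b)+t f(a)$. Multiplying by $[t^{\alpha-1}+(1-t)^{\alpha-1}]g(x_{t})\ge 0$ and integrating, the symmetry above gives $\int_{0}^{1}[t^{\alpha-1}+(1-t)^{\alpha-1}]g(x_{t})\,t\,dt=\int_{0}^{1}[t^{\alpha-1}+(1-t)^{\alpha-1}]g(x_{t})(1-t)\,dt=\tfrac12\int_{0}^{1}[t^{\alpha-1}+(1-t)^{\alpha-1}]g(x_{t})\,dt$, so the upper bound equals $\frac{f(a)+f(b)}{2}\int_{0}^{1}[t^{\alpha-1}+(1-t)^{\alpha-1}]g(x_{t})\,dt$, which is the right inequality. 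Multiplying back through by $\frac{1}{\Gamma(\alpha)}\bigl(\tfrac{b-a}{ab}\bigr)^{\alpha}$ recovers the stated chain.

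Nothing deep is involved: the real work is the careful bookkeeping in the change of variables — getting the limits $1/b$, $1/a$, the Jacobian $\tfrac{b-a}{ab}$, and the two distinct kernels $s-\tfrac1b$ versus $\tfrac1a-s$ that generate the $t^{\alpha-1}$ and $(1-t)^{\alpha-1}$ weights — together with checking that every integral converges (one uses $\alpha>0$, so that $t^{\alpha-1}$ is integrable near $0$, and the fact that a harmonically convex $f$ on the closed interval $[a,b]$ is bounded, whence $fg\in L[a,b]$ and $(fg)\circ h$ is integrable on $[1/b,1/a]$). The only genuinely substantive observation is that harmonic symmetry of $g$ is exactly the identity $g(x_{t})=g(x_{1-t})$, which is what makes the cross terms cancel; everything else is the standard Fej\'{e}r computation.
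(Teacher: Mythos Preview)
This theorem is not proved in the present paper; it is quoted in the introduction as a known result from \cite{IK15}. Your argument is correct and is precisely the standard route taken in that reference: the substitution $s=\tfrac{t}{a}+\tfrac{1-t}{b}$ collapses the sum $J_{1/a^{-}}^{\alpha}+J_{1/b^{+}}^{\alpha}$ to a single $[0,1]$-integral with the symmetric kernel $t^{\alpha-1}+(1-t)^{\alpha-1}$, harmonic symmetry of $g$ becomes the identity $g(x_{t})=g(x_{1-t})$, and the two inequalities then follow from the midpoint and chord estimates for harmonically convex $f$ exactly as in the classical Fej\'er proof. There is nothing to add or correct; your bookkeeping on the Jacobian, the endpoints, and the two kernels is accurate, and the convergence remarks are in order (indeed $f\circ h$ is convex on $[1/b,1/a]$, hence bounded there, so your boundedness claim is justified).
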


with $\alpha >0$ and $h(x)=1/x,$ $x\in \left[ \frac{1}{b},\frac{1}{a}\right] 
$.

\begin{definition}
Let $f\in L[a,b]$. The right-hand side and left-hand side Hadamard
fractional integrals $J_{a^{+}}^{\alpha }f$ and $J_{b^{-}}^{\alpha }f$ of
order $\alpha >0$ with $a\geq 0$ are defined by

\begin{equation*}
J_{a^{+}}^{\alpha }f(x)=\frac{1}{\Gamma (\alpha )}\overset{x}{\underset{a}{%
\int }}\left( x-t\right) ^{\alpha -1}f(t)dt,\ x>a
\end{equation*}%
\begin{equation*}
J_{b^{-}}^{\alpha }f(x)=\frac{1}{\Gamma (\alpha )}\overset{b}{\underset{x}{%
\int }}\left( t-x\right) ^{\alpha -1}f(t)dt,\ x<b
\end{equation*}%
respectively where $\Gamma (\alpha )$ is the Gamma function defined by $%
\Gamma (\alpha )=\overset{\infty }{\underset{0}{\int }}e^{-t}t^{\alpha -1}$
and $J_{a^{+}}^{0}f(x)=J_{b^{-}}^{0}f(x)=f(x)$
\end{definition}

\begin{lemma}
For $0< \theta \leq 1 $ and $0< a \leq b $ we have 
\begin{equation*}
\left\vert a^{\theta}-b^{\theta} \right\vert \leq \left( b-a\right)^{\theta}.
\end{equation*}
\end{lemma}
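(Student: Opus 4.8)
The plan is to reduce the claimed inequality to the subadditivity of the power map $\phi(t)=t^{\theta}$ on $[0,\infty)$. First I would remove the absolute value: since $0<\theta\le 1$, the function $t\mapsto t^{\theta}$ is nondecreasing on $(0,\infty)$, so $a\le b$ gives $a^{\theta}\le b^{\theta}$ and hence $\left\vert a^{\theta}-b^{\theta}\right\vert = b^{\theta}-a^{\theta}$. It therefore suffices to prove $b^{\theta}\le (b-a)^{\theta}+a^{\theta}$.

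Next I would establish that $x\mapsto x^{\theta}$ is subadditive on $[0,\infty)$ for $0<\theta\le 1$, i.e.\ $(u+v)^{\theta}\le u^{\theta}+v^{\theta}$ for all $u,v\ge 0$. One clean route: $\phi(t)=t^{\theta}$ is concave on $[0,\infty)$ (its second derivative $\theta(\theta-1)t^{\theta-2}$ is $\le 0$) and $\phi(0)=0$, and any concave function vanishing at the origin is subadditive --- for $u,v>0$ write $u=\frac{u}{u+v}(u+v)+\frac{v}{u+v}\cdot 0$ and likewise for $v$, apply concavity to each, and add. An alternative that avoids concavity is the normalization $t=a/b\in(0,1]$: dividing the target inequality by $b^{\theta}$ turns it into $1\le t^{\theta}+(1-t)^{\theta}$, which holds because on $[0,1]$ one has $s^{\theta}\ge s$ (since $s^{\theta-1}=s^{-(1-\theta)}\ge 1$ when $s\le 1$), so $t^{\theta}+(1-t)^{\theta}\ge t+(1-t)=1$.

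Then I would finish by applying subadditivity with $u=b-a\ge 0$ and $v=a>0$: this gives $b^{\theta}=\bigl((b-a)+a\bigr)^{\theta}\le (b-a)^{\theta}+a^{\theta}$, which rearranges to $b^{\theta}-a^{\theta}\le (b-a)^{\theta}$, and together with the first step this is exactly $\left\vert a^{\theta}-b^{\theta}\right\vert \le (b-a)^{\theta}$.

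I do not anticipate a real obstacle; the only places needing a touch of care are the degenerate cases ($a=b$, where both sides vanish, and $\theta=1$, where the inequality is an equality), and --- should one prefer the calculus route --- the justification that the auxiliary function $f(b)=(b-a)^{\theta}+a^{\theta}-b^{\theta}$ is nondecreasing on $[a,\infty)$: it is continuous there, and on $(a,\infty)$ its derivative $\theta\bigl[(b-a)^{\theta-1}-b^{\theta-1}\bigr]$ is nonnegative because $x\mapsto x^{\theta-1}$ is nonincreasing and $b-a\le b$, whence $f(b)\ge f(a)=0$.
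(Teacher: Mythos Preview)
Your argument is correct. The reduction to $b^{\theta}-a^{\theta}\le (b-a)^{\theta}$ via monotonicity of $t\mapsto t^{\theta}$ is valid, and each of the three routes you sketch (concavity with $\phi(0)=0$ giving subadditivity, the normalization $t=a/b$ reducing to $t^{\theta}+(1-t)^{\theta}\ge 1$, and the monotonicity of the auxiliary $f(b)=(b-a)^{\theta}+a^{\theta}-b^{\theta}$) establishes the needed inequality $(u+v)^{\theta}\le u^{\theta}+v^{\theta}$ for $u,v\ge 0$. The edge cases $a=b$ and $\theta=1$ are handled.

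As for comparison with the paper: there is nothing to compare against. The paper states this lemma without proof --- it is quoted as a known elementary inequality and then applied later (in the proof of Corollary~1) to bound $(1+t)^{\alpha}-(1-t)^{\alpha}$ by $(2t)^{\alpha}$. So your write-up in fact supplies what the paper omits. If you wish to align with how the lemma is used, the normalization approach is the most economical: it is a two-line argument and avoids invoking concavity or differentiability.
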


In \cite{H11} D. Y. Hwang found out a new identity and by using this
identity, established a new inequalities. Then in \cite{IS15} \.{I}. \.{I}%
\c{s}can and S. Turhan used this identity for GA-convex functions and obtain
generalized new inequalities. In this paper, we established a new inequality
similar to inequality in \cite{IS15} and then we obtained some new and
general integral inequalities for differentiable harmonically-convex
functions using this lemma. The following sections, let the notion, $L\left(
t\right) =\frac{aH}{tH+(1-t)a}$, $U\left( t\right) =\frac{bH}{tH+(1-t)b}$
and $H=H\left( a,b\right) =\frac{2ab}{a+b}$.

\section{Main result}

Throughout this section, let $\left\Vert g\right\Vert _{\infty }=\sup_{t\in %
\left[ a,b\right] }\left\vert g(x)\right\vert $, for the continuous function 
$g:[a,b]\longrightarrow \lbrack 0,\infty )$ be differentiable mapping $I^{o}$%
, where $a,b\in I$ with $a\leq b$, and $h:\left[ a,b\right] \longrightarrow %
\left[ 0,\infty \right) $ be differentiable mapping.

\begin{lemma}
If $f^{\prime }\in L\left[ a,b\right] $then the following inequality holds:
\end{lemma}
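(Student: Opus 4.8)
The plan is to establish the identity by starting from the side involving $f'$ and reducing it, through the two substitutions $x=L(t)$ and $x=U(t)$, to integrals in $x$ against the harmonic weight $x^{-2}$ over $[a,H]$ and $[H,b]$ respectively, followed by an integration by parts. The first thing I would record are the elementary facts behind these substitutions, all of which follow directly from the definitions of $L$, $U$ and $H=\tfrac{2ab}{a+b}$: the reciprocal $1/L(t)=\tfrac1H+t\bigl(\tfrac1a-\tfrac1H\bigr)$ is affine in $t$ and runs from $1/H$ to $1/a$, the reciprocal $1/U(t)=\tfrac1H+t\bigl(\tfrac1b-\tfrac1H\bigr)$ is affine in $t$ and runs from $1/H$ to $1/b$, one has the key cancellation $\tfrac1a-\tfrac1H=\tfrac1H-\tfrac1b=\tfrac{b-a}{2ab}$, and $L'(t)=-\tfrac{b-a}{2ab}L(t)^2$, $U'(t)=\tfrac{b-a}{2ab}U(t)^2$, so that $dt$ equals, up to the constant $\tfrac{2ab}{b-a}$ and a sign fixed by orientation, $\pm\,x^{-2}dx$. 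These relations convert every $\int_0^1(\cdots)\,dt$ in the statement into an $x$-integral weighted by $x^{-2}$, matching the shape of the (fractional or ordinary) integrals on the other side.

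Second, on each of the two halves I would integrate by parts in the variable $t$, taking the factor that multiplies $f'(L(t))L'(t)$ (respectively $f'(U(t))U'(t)$) — which by construction is a primitive of, or in the fractional case a Hadamard fractional integral of, the relevant combination of $g\circ h$ — as the part to be differentiated, and $\tfrac{d}{dt}f(L(t))$ (respectively $\tfrac{d}{dt}f(U(t))$) as the part to be integrated. The boundary contributions at $t=0$ collapse, on both halves, to the single value $f(H)=f\!\left(\tfrac{2ab}{a+b}\right)$ times the appropriate mass of $g$, while the boundary contributions at $t=1$ produce $f(a)$ and $f(b)$ times the complementary masses; the surviving integral terms are precisely the $\int \tfrac{f(x)g(x)}{x^{2}}\,dx$-type quantities, or their fractional analogues $J^{\alpha}_{1/a^{-}}(fg\circ h)(1/b)$ and $J^{\alpha}_{1/b^{+}}(fg\circ h)(1/a)$. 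Adding the two halves and merging constants with $\tfrac1a-\tfrac1H=\tfrac1H-\tfrac1b$ then yields the asserted identity.

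If the statement is written with the Hadamard fractional integrals of $g\circ h$, one further bookkeeping step is required: after the substitution the $t$-integrals of $(1-t)^{\alpha-1}$ and $t^{\alpha-1}$ against $g\bigl(h(\cdot)\bigr)$ must be recognized as exactly $\Gamma(\alpha)\,J^{\alpha}_{1/a^{-}}(g\circ h)(1/b)$ and $\Gamma(\alpha)\,J^{\alpha}_{1/b^{+}}(g\circ h)(1/a)$, which is immediate from the Definition of $J^{\alpha}_{a^{\pm}}$ together with $h(x)=1/x$ and the affine change of variable above, with Fubini used to interchange the order in the iterated integral defining the fractional integral. If moreover $g$ is harmonically symmetric with respect to $2ab/(a+b)$ — equivalently $(g\circ h)$ is symmetric about the midpoint $\tfrac12(1/a+1/b)$ of $[1/b,1/a]$ — the two half-kernels coincide and the identity collapses to a single weighted term; and if the statement is phrased directly as an inequality, one derives the identity as above and then applies the harmonic convexity of $f$ on $[a,b]$ together with the nonnegativity of $g$ to the resulting integrand.

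The step I expect to be the main obstacle is organizational rather than conceptual: the $L$-substitution reverses orientation while the $U$-substitution preserves it, so the signs coming from $dt$, from $L'$ and $U'$, and from the two integrations by parts must all be reconciled, and one must check that the four boundary terms combine so that $f\!\left(\tfrac{2ab}{a+b}\right)$ emerges paired with the total mass of $g$ and $f(a),f(b)$ with the correct complementary masses. A secondary point is regularity: the hypothesis $f'\in L[a,b]$ is exactly what legitimizes the integrations by parts, and since $L$ and $U$ map $[0,1]$ into $[a,b]$, the composites $f'(L(t))$ and $f'(U(t))$ remain integrable after the change of variables, so no assumption on $f$ beyond $f'\in L[a,b]$ is needed.
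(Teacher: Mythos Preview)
Your core mechanism---integration by parts together with the substitutions $x=L(t)$ and $x=U(t)$---is precisely what the paper does. The paper works directly in $t$, writing each half as $\int_0^1[2h(L(t))-h(b)]\,d(f(L(t)))$ and $\int_0^1[2h(U(t))-h(b)]\,d(f(U(t)))$, reads off the boundary values at $t=0,1$, and then changes variables in the surviving integrals to recover $\int_a^b f(x)h'(x)\,dx$. Your order (substitute first, then integrate by parts in $x$) is equivalent.

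Where you diverge from the paper is in scope. The lemma, as it actually reads in equation~(2.1), is a bare identity in $f$ and an \emph{arbitrary} differentiable auxiliary $h$: there is no weight $g$, no fractional exponent $\alpha$, no harmonic symmetry, and no convexity hypothesis. All of that structure enters only afterwards, when the paper \emph{specializes} $h(t)=\int_{1/t}^{1/a}\bigl[(x-\tfrac1b)^{\alpha-1}+(\tfrac1a-x)^{\alpha-1}\bigr](g\circ\varphi)(x)\,dx$ in Corollary~1 and then applies harmonic convexity of $|f'|$ in the Theorems. Your paragraphs on recognizing fractional integrals via Fubini, on the harmonic symmetry of $g$, and on invoking convexity are therefore not part of the proof of this lemma at all; they belong to the subsequent results that \emph{use} the lemma.

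One related remark on your bookkeeping: you expect the $t=0$ boundary contributions to produce $f(H)$ times a mass of $g$, which then cancels by symmetry. In the paper's stated identity~(2.1) no $f(H)$ term appears. For a general $h$ the boundary at $t=0$ does carry a factor $[2h(H)-h(b)]f(H)$; it is only in the applications, where the specific $h$ above together with the harmonic symmetry of $g$ forces $2h(H)=h(b)$, that this term drops out. So your intuition about the $f(H)$ cancellation is correct for the corollaries, but at the level of the bare lemma the paper simply does not display any midpoint term.
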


\begin{equation}
\left[ h(b)-2h(a)\right] \frac{f(a)}{2}+h(b)\frac{f(b)}{2}%
-\int\limits_{a}^{b}f(x)h^{\prime }(x)dx\   \label{2.1}
\end{equation}%
\begin{equation*}
=\frac{b-a}{4ab}\left\{ \int\limits_{0}^{1}\left[ 2h\left( L(t)\right) -h(b)%
\right] f^{\prime }\left( L(t)\right) \left( L(t)\right) ^{2}dt\ \right.
\end{equation*}%
\begin{equation*}
\ \ \ \ \ \ \ \ \ \ \ \ \ \ \ \ \ \ \left. +\int\limits_{0}^{1}\left[
2h\left( U(t)\right) -h(b)\right] f^{\prime }\left( U(t)\right) \left(
U(t)\right) ^{2}dt\right\}
\end{equation*}

\begin{proof}
By the integration by parts, we have 
\begin{equation*}
I_{1}=\int\limits_{0}^{1}\left[ 2h\left( L(t)\right) -h(b)\right] d\left(
f\left( L(t)\right) \right)
\end{equation*}%
\begin{equation*}
=\left. \left[ 2h\left( L(t)\right) -h(b)\right] f\left( L(t)\right)
\right\vert _{0}^{1}
\end{equation*}%
\begin{equation*}
\ \ \ \ \ \ \ \ \ \ \ \ \ -\left( \frac{1}{a}-\frac{1}{b}\right)
\int\limits_{0}^{1}f\left( L(t)\right) h^{\prime }\left( L(t)\right) \left(
L(t)\right) ^{2}dt
\end{equation*}%
\newline
and

\begin{equation*}
I_{2}=\int\limits_{0}^{1}\left[ 2h\left( U(t)\right) -h(b)\right] d\left(
f\left( U(t)\right) \right)
\end{equation*}%
\begin{equation*}
=\left. \left[ 2h\left( U(t)\right) -h(b)\right] f\left( U(t)\right)
\right\vert _{0}^{1}
\end{equation*}%
\begin{equation*}
\ \ \ \ \ \ \ \ \ \ \ \ \ -\left( \frac{1}{a}-\frac{1}{b}\right)
\int\limits_{0}^{1}f\left( U(t)\right) h^{\prime }\left( U(t)\right) \left(
U(t)\right) ^{2}dt
\end{equation*}

Therefore

\begin{equation}
\frac{I_{1}+I_{2}}{2}=\left[ h(b)-2h(a)\right] \frac{f(a)}{2}+h(b)\frac{f(b)%
}{2}-\frac{b-a}{2ab}\left\{ \int\limits_{0}^{1}f\left( L(t)\right) h^{\prime
}\left( L(t)\right) \left( L(t)\right) ^{2}dt\right.
\end{equation}%
\begin{equation*}
\ \ \ \ \ \ \ \ \ \ \ \ \ \ \ \ \ \ \ \ \ \ \ \ \ \ \ \ \ \ \ \ \ \ \ \ \ \
\ \ \ \ \ \ \ \ \ \ \ \ \ \ \ \ \ \ \ \ \ \ \ \ \ \ \ \ \ \ \ \ \ \ \ \ \ \
\ \ \ \ \ \ \ \ \ \ \ \ \ \ \ \ \ \ \ \ \ \ \ \ \ \left.
+\int\limits_{0}^{1}f\left( U(t)\right) h^{\prime }\left( U(t)\right) \left(
U(t)\right) ^{2}dt\right\}
\end{equation*}

This complete the proof
\end{proof}

\begin{lemma}
For $a,H,b>0$, we have

\begin{equation}
\zeta _{1}\left( a,b\right) =\int\limits_{0}^{1}\left\vert 2h\left(
L(t)\right) -h(b)\right\vert \left( 1-t\right) \left( L(t)\right) ^{2}dt
\end{equation}%
\begin{equation}
\zeta _{2}\left( a,b\right) =\int\limits_{0}^{1}t\left( L(t)\right)
^{2}\left\vert 2h\left( L(t)\right) -h(b)\right\vert
dt+\int\limits_{0}^{1}t(\left( U(t)\right) ^{2}\left\vert 2h\left(
U(t)\right) -h(b)\right\vert dt
\end{equation}%
\begin{equation}
\zeta _{3}\left( a,b\right) =\int\limits_{0}^{1}\left\vert 2h\left(
U(t)\right) -h(b)\right\vert \left( 1-t\right) \left( U(t)\right) ^{2}dt
\end{equation}
\end{lemma}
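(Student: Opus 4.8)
The plan is first to observe that, as written, the statement mainly introduces three auxiliary nonnegative quantities $\zeta_1,\zeta_2,\zeta_3$ attached to the weight $h$ and to the reparametrizations $L(t),U(t)$, so the only thing that strictly needs checking is that the three integrals are finite. This I would dispatch immediately: since $H=2ab/(a+b)$ lies in $[a,b]$ whenever $0<a\le b$, we have $L(0)=U(0)=H$, $L(1)=a$, $U(1)=b$, and $L,U$ are smooth on $[0,1]$ with ranges contained in $[a,b]$; because $h$ is differentiable, hence continuous, on $[a,b]$, the maps $t\mapsto h(L(t))$ and $t\mapsto h(U(t))$ are bounded and continuous on $[0,1]$. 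Each integrand in the three displays is therefore a product of such a bounded function with powers of $L(t)$ or $U(t)$ and with $1-t$ or $t$, hence bounded and measurable on $[0,1]$, so the integrals converge; nonnegativity of $\zeta_1,\zeta_2,\zeta_3$ is clear since every factor in each integrand is nonnegative on $[0,1]$.

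The substantive content presumably intended --- and what I would actually develop --- is the reduction of $\zeta_1,\zeta_2,\zeta_3$ to closed form (or at least to a form usable in the next section's estimates). First I would rewrite $L,U$ explicitly: substituting $H=2ab/(a+b)$ and simplifying gives $L(t)=2ab/\big((a+b)+t(b-a)\big)$ and $U(t)=2ab/\big((a+b)-t(b-a)\big)$, so $L(t)^2$ and $U(t)^2$ are explicit rational functions of $t$. Next I would eliminate the moduli by determining the sign of $2h(L(t))-h(b)$ and $2h(U(t))-h(b)$ on $[0,1]$. For the canonical choice $h(x)=1/x$ --- the one for which $\int_a^b f(x)h'(x)\,dx=-\int_a^b f(x)x^{-2}\,dx$, so that the identity (2.1) recaptures the harmonic Hermite-Hadamard inequality (1.3) --- a short computation gives $2h(L(t))-h(b)=\frac{1+t}{a}-\frac{t}{b}\ge\frac1a>0$ and $2h(U(t))-h(b)=\frac{1-t}{a}+\frac{t}{b}>0$ for $0<a\le b$, so both absolute values drop. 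Finally I would apply the substitution $u=(a+b)+t(b-a)$ in the $L$-integrals and $u=(a+b)-t(b-a)$ in the $U$-integrals; each $\zeta_i$ then turns into a constant multiple of $\int(\alpha u^{2}+\beta u+\gamma)\,u^{-2}\,du$ over a subinterval of $[a+b,2b]$ or $[2a,a+b]$, which integrates to an elementary expression in $a,b$ together with logarithms such as $\ln\frac{2b}{a+b}$ and $\ln\frac{2a}{a+b}$.

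The main obstacle is precisely this sign question for the moduli $|2h(L(t))-h(b)|$ and $|2h(U(t))-h(b)|$: for an arbitrary weight $h:[a,b]\to[0,\infty)$ there is no reason either expression keeps a fixed sign on $[0,1]$, so without a structural hypothesis on $h$ the statement cannot be pushed past the definitional stage, and the $\zeta_i$ must simply be read as the shorthand in which the forthcoming Hermite-Hadamard-Fejer type bounds are phrased. The modulus is removable --- and the closed-form evaluation above goes through --- as soon as $h$ is specialized, in particular when $h$ is monotone on $[a,b]$ (a direct monotonicity argument then compares $h$ at $L(t)$ or $U(t)$, which lie respectively in $[a,H]$ and $[H,b]$, with $h(b)$), and a fortiori for $h(x)=1/x$. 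So the proof I envisage splits cleanly into the trivial well-definedness part, valid for every $h$, and the computational part, which I would carry out under the additional hypothesis that makes the signs unambiguous.
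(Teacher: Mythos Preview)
Your reading is correct: in the paper this ``lemma'' is purely notational --- it introduces the symbols $\zeta_1,\zeta_2,\zeta_3$ and carries no proof at all, passing immediately to Theorem~7, where these quantities appear verbatim as the coefficients in the right-hand side bound. Your well-definedness check (continuity of $L,U,h$ forcing bounded integrands on $[0,1]$) is sound and already more than the paper supplies.

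The substantive second half of your proposal --- specializing $h$, removing the absolute values, and integrating in closed form --- is not part of the paper's treatment of this statement. The paper never evaluates $\zeta_1,\zeta_2,\zeta_3$ for a general $h$; instead it defers all concrete computation to the corollaries, where $h$ is taken to be the particular fractional-integral weight $h(t)=\int_{1/t}^{1/a}\big[(x-\tfrac1b)^{\alpha-1}+(\tfrac1a-x)^{\alpha-1}\big](g\circ\varphi)(x)\,dx$ rather than $h(x)=1/x$, and the moduli are handled via the harmonic symmetry of $g$ (equations (2.12)--(2.15)) rather than by a monotonicity argument. So your closed-form programme is a different (and for the special case $h(x)=1/x$ perfectly viable) route, but it is extraneous to what the lemma itself asserts or what the paper proves at this point.
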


\begin{theorem}
Let $f:I\subseteq 
\mathbb{R}
=\left( 0,\infty \right) \longrightarrow 
\mathbb{R}
$ be differentiable mapping $I^{o}$, where $a,b\in I$ with $a<b$. If the
mapping $\left\vert f^{\prime }\right\vert $ is harmonically-convex on $%
\left[ a,b\right] $, then the following inequality holds:

\begin{equation}
\left\vert \left[ h(b)-2h(a)\right] \frac{f(a)}{2}+h(b)\frac{f(b)}{2}%
-\int\limits_{a}^{b}f(x)h^{\prime }(x)dx\right\vert  \label{2.6}
\end{equation}%
\begin{equation*}
\leq \frac{b-a}{4ab}\left[ \zeta _{1}(a,b)\left\vert f^{\prime
}(a)\right\vert +\zeta _{2}(a,b)\left\vert f^{\prime }(H)\right\vert +\zeta
_{3}(a,b)\left\vert f^{\prime }(b)\right\vert \right]
\end{equation*}%
where $\zeta _{1}\left( a,b\right) ,\zeta _{2}\left( a,b\right) ,\zeta
_{3}\left( a,b\right) $ are defined in Lemma 3.
\end{theorem}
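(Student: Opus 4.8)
The plan is to derive (\ref{2.6}) directly from the identity of Lemma~2. Write $\Delta$ for the left-hand side of (\ref{2.1}), i.e. $\Delta=[h(b)-2h(a)]\tfrac{f(a)}{2}+h(b)\tfrac{f(b)}{2}-\int_a^b f(x)h'(x)\,dx$. Taking absolute values in (\ref{2.1}), applying the triangle inequality, and using that the factors $(L(t))^2$ and $(U(t))^2$ are nonnegative, one obtains
\begin{equation*}
|\Delta|\le \frac{b-a}{4ab}\left( \int_0^1 |2h(L(t))-h(b)|\,|f'(L(t))|\,(L(t))^2\,dt + \int_0^1 |2h(U(t))-h(b)|\,|f'(U(t))|\,(U(t))^2\,dt\right).
\end{equation*}

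The key step is to estimate $|f'(L(t))|$ and $|f'(U(t))|$ using the hypothesis that $|f'|$ is harmonically convex on $[a,b]$. With $H=H(a,b)=\tfrac{2ab}{a+b}$, the quantity $L(t)=\tfrac{aH}{tH+(1-t)a}$ is exactly the expression $\tfrac{xy}{tx+(1-t)y}$ of Definition~2 evaluated at $x=H$, $y=a$, and similarly $U(t)=\tfrac{bH}{tH+(1-t)b}$ is that expression at $x=H$, $y=b$. Since $a,H,b$ all lie in $[a,b]$, Definition~2 gives $|f'(L(t))|\le t|f'(a)|+(1-t)|f'(H)|$ and $|f'(U(t))|\le t|f'(b)|+(1-t)|f'(H)|$. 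Substituting these two bounds into the preceding display and using linearity of the integral splits the right-hand side into integrals each carrying the weight $t$ or the weight $1-t$.

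It then remains to reassemble these integrals into the quantities $\zeta_1,\zeta_2,\zeta_3$ of Lemma~3 (the weight-$(1-t)$ pieces and weight-$t$ pieces recombining as in their definitions) and to collect the coefficients of $|f'(a)|$, $|f'(H)|$ and $|f'(b)|$, which yields (\ref{2.6}). I expect the only real work to be this bookkeeping — keeping each weight $t$ or $1-t$ attached to the correct endpoint value of $|f'|$ (the $L$-integral feeds the terms in $|f'(a)|$ and $|f'(H)|$, the $U$-integral the terms in $|f'(b)|$ and $|f'(H)|$) and recognising the integrands $|2h(L(t))-h(b)|(L(t))^2$ and $|2h(U(t))-h(b)|(U(t))^2$, with their weights, as the building blocks of $\zeta_1,\zeta_2,\zeta_3$. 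There is no analytic obstacle beyond Lemma~2 together with this single application of harmonic convexity; in particular no fractional-integral machinery enters at this stage.
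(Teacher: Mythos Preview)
Your approach is exactly the paper's: take absolute values in Lemma~2, apply the triangle inequality, then use harmonic convexity of $|f'|$ at the points $L(t)$ and $U(t)$, and regroup into the $\zeta_i$. There is no extra ingredient.

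One bookkeeping warning, though. Your identification $L(t)=\dfrac{aH}{tH+(1-t)a}=\dfrac{xy}{tx+(1-t)y}$ with $x=H$, $y=a$ is correct, and Definition~2 then gives
\[
|f'(L(t))|\le t\,|f'(a)|+(1-t)\,|f'(H)|,\qquad
|f'(U(t))|\le t\,|f'(b)|+(1-t)\,|f'(H)|,
\]
as you wrote (check $t=0$: $L(0)=H$ and the bound reduces to $|f'(H)|$; $t=1$: $L(1)=a$ and the bound reduces to $|f'(a)|$). However, the paper's display (2.8) has these weights interchanged --- it writes $t|f'(H)|+(1-t)|f'(a)|$ --- and the $\zeta_i$ in Lemma~3 were set up to match \emph{that} version: $\zeta_1$ carries the factor $(1-t)$ against $|f'(a)|$, and $\zeta_2$ carries $t$ against $|f'(H)|$. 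With your (correct) inequalities the coefficient of $|f'(a)|$ is the $t$-weighted $L$-integral and the coefficient of $|f'(H)|$ is the $(1-t)$-weighted sum, i.e.\ the roles of $t$ and $1-t$ in the $\zeta_i$ get swapped. So when you ``reassemble into $\zeta_1,\zeta_2,\zeta_3$'' you will not land on the stated expressions verbatim; this is a slip in the paper's intermediate step (2.8) and in the matching definitions of Lemma~3, not a flaw in your argument.
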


\begin{proof}
Continuing equality (\ref{2.1}) in Lemma 2 
\begin{equation}
\left\vert \left[ h(b)-2h(a)\right] \frac{f(a)}{2}+h(b)\frac{f(b)}{2}%
-\int\limits_{a}^{b}f(x)h^{\prime }(x)dx\ \right\vert  \label{2.7}
\end{equation}%
\begin{equation*}
\leq \frac{b-a}{4ab}\left\{ \int\limits_{0}^{1}\left\vert 2h\left(
L(t)\right) -h(b)\right\vert \left\vert f^{\prime }\left( L(t)\right) \left(
L(t)\right) ^{2}\right\vert dt\ \right.
\end{equation*}%
\begin{equation*}
\ \ \ \ \ \ \ \ \ \ \ \ \ \ \ \ \ \ \left. +\int\limits_{0}^{1}\left\vert
2h\left( U(t)\right) -h(b)\right\vert \left\vert f^{\prime }\left(
U(t)\right) \left( U(t)\right) ^{2}\right\vert dt\right\}
\end{equation*}

Using $\left\vert f^{\prime }\right\vert $ is harmoncally-convex in (\ref%
{2.7})

\begin{equation}
\left\vert \left[ h(b)-2h(a)\right] \frac{f(a)}{2}+h(b)\frac{f(b)}{2}%
-\int\limits_{a}^{b}f(x)h^{\prime }(x)dx\ \right\vert  \label{2.8}
\end{equation}%
\begin{equation*}
\leq \frac{b-a}{4ab}\left\{ \int\limits_{0}^{1}\left\vert 2h\left(
L(t)\right) -h(b)\right\vert \left\{ t\left\vert f^{\prime }(H)\right\vert
+(1-t)\left\vert f^{\prime }(a)\right\vert \right\} \left( L(t)\right)
^{2}dt\ \right.
\end{equation*}%
\begin{equation*}
\ \ \ \ \ \ \ \ \ \ \ \ \ \ \ \ \ \ \left. +\int\limits_{0}^{1}\left\vert
2h\left( U(t)\right) -h(b)\right\vert \left\{ t\left\vert f^{\prime
}(H)\right\vert +(1-t)\left\vert f^{\prime }(b)\right\vert \right\} \left(
U(t)\right) ^{2}dt\right\} ,
\end{equation*}

by (\ref{2.8}) and Lemma 2, this proof is complete.
\end{proof}

\begin{corollary}
Let $h(t)=\int\limits_{1/t}^{1/a}\left[ \left( x-\frac{1}{b}\right) ^{\alpha
-1}+\left( \frac{1}{a}-x\right) ^{\alpha -1}\right] g\circ \varphi (x)dx$
for all $1/t\in \lbrack \frac{1}{b},\frac{1}{a}]$ , $\alpha >0$ and $g:\left[
a,b\right] \longrightarrow \left[ 0,\infty \right) $ be continuous positive
mapping and symmetric to $\frac{2ab}{a+b}$ in Teorem 7, we obtain:

\begin{equation}
\left\vert \left( \frac{f(a)+f(b)}{2}\right) \left[ J_{1/b^{+}}^{\alpha
}g\circ \varphi (1/a)+J_{1/a^{-}}^{\alpha }g\circ \varphi (1/b)\right] -%
\left[ J_{1/b^{+}}^{\alpha }\left( fg\circ \varphi \right) \left( 1/a\right)
+J_{1/a^{-}}^{\alpha }\left( fg\circ \varphi \right) \left( 1/b\right) %
\right] \right\vert  \label{2.9}
\end{equation}%
\begin{equation*}
\leq \frac{(b-a)^{\alpha +1}\left\Vert g\right\Vert _{\infty }}{2^{\alpha
+1}(ab)^{\alpha +1}\Gamma \left( \alpha +1\right) }\left[ C_{1}\left( \alpha
\right) \left\vert f^{\prime }(a)\right\vert +C_{2}\left( \alpha \right)
\left\vert f^{\prime }(H)\right\vert +C_{3}\left( \alpha \right) \left\vert
f^{\prime }(b)\right\vert \right]
\end{equation*}

where 
\begin{equation*}
C_{1}\left( \alpha \right) =\int\limits_{0}^{1}(1-t)\left[ (1+t)^{\alpha
}-(1-t)^{\alpha }\right] (L(t))^{2}dt
\end{equation*}%
\begin{equation*}
C_{2}\left( \alpha \right) =\int\limits_{0}^{1}t\left[ (1+t)^{\alpha
}-(1-t)^{\alpha }\right] \left[ (L(t))^{2}+(U(t))^{2}\right] dt
\end{equation*}%
\begin{equation*}
C_{3}\left( \alpha \right) =\int\limits_{0}^{1}(1-t)\left[ (1+t)^{\alpha
}-(1-t)^{\alpha }\right] (L(t))^{2}dt
\end{equation*}

Specially in (\ref{2.9}) and using Lemma 1, for $0<\alpha \leq 1$ we have:

\begin{equation}
\left\vert \left( \frac{f(a)+f(b)}{2}\right) \left[ J_{1/b^{+}}^{\alpha
}g\circ \varphi (1/a)+J_{1/a^{-}}^{\alpha }g\circ \varphi (1/b)\right] -%
\left[ J_{1/b^{+}}^{\alpha }\left( fg\circ \varphi \right) \left( 1/a\right)
+J_{1/a^{-}}^{\alpha }\left( fg\circ \varphi \right) \left( 1/b\right) %
\right] \right\vert  \label{2.10}
\end{equation}%
\begin{equation*}
\leq \frac{(b-a)^{\alpha +1}\left\Vert g\right\Vert _{\infty }}{%
2(ab)^{\alpha +1}\Gamma \left( \alpha +1\right) }\left[ C_{1}\left( \alpha
\right) \left\vert f^{\prime }(a)\right\vert +C_{2}\left( \alpha \right)
\left\vert f^{\prime }(H)\right\vert +C_{3}\left( \alpha \right) \left\vert
f^{\prime }(b)\right\vert \right]
\end{equation*}

where 
\begin{equation*}
C_{1}\left( \alpha \right) =\int\limits_{0}^{1}(1-t)t^{\alpha }(L(t))^{2}dt
\end{equation*}%
\begin{equation*}
C_{2}\left( \alpha \right) =\int\limits_{0}^{1}t^{\alpha +1}\left[
(L(t))^{2}+(U(t))^{2}\right] dt
\end{equation*}%
\begin{equation*}
C_{3}\left( \alpha \right) =\int\limits_{0}^{1}(1-t)t^{\alpha }(U(t))^{2}dt
\end{equation*}
\end{corollary}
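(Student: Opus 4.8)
The plan is to substitute the specific weight $h$ of the statement — taking $\varphi(x)=1/x$, so that $g\circ\varphi$ matches the convention of the earlier theorems — into Theorem 7, to rewrite every term of (\ref{2.6}) as a Hadamard fractional integral, and then to evaluate the three constants $\zeta_1,\zeta_2,\zeta_3$ of Lemma 3 in closed form. First I would differentiate $h$. Writing $\Phi(x)=\bigl[(x-\tfrac1b)^{\alpha-1}+(\tfrac1a-x)^{\alpha-1}\bigr]g(1/x)$, the fundamental theorem of calculus together with $\tfrac{d}{dt}(1/t)=-1/t^{2}$ gives $h'(t)=\tfrac1{t^{2}}\Phi(1/t)=\tfrac1{t^{2}}\bigl[(\tfrac1t-\tfrac1b)^{\alpha-1}+(\tfrac1a-\tfrac1t)^{\alpha-1}\bigr]g(t)$ on $(a,b)$; since $\alpha>0$ the defining integrals converge, $h\ge 0$ on $[a,b]$, and the singular power factors are integrable, so the hypotheses of Theorem 7 are in force. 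The substitution $x\mapsto 1/x$ then converts $\int_a^b f(x)h'(x)\,dx$ into $\Gamma(\alpha)\bigl[J_{1/b^{+}}^{\alpha}(fg\circ\varphi)(1/a)+J_{1/a^{-}}^{\alpha}(fg\circ\varphi)(1/b)\bigr]$, and the same substitution with $f\equiv 1$ gives $h(b)=\Gamma(\alpha)\bigl[J_{1/b^{+}}^{\alpha}(g\circ\varphi)(1/a)+J_{1/a^{-}}^{\alpha}(g\circ\varphi)(1/b)\bigr]$, while $h(a)=0$. Plugging these into (\ref{2.6}) and cancelling the common factor $\Gamma(\alpha)$ yields exactly the left-hand side of (\ref{2.9}).

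The heart of the proof is the computation of $\zeta_1,\zeta_2,\zeta_3$. Put $c(t):=1/L(t)$ and $\tilde c(t):=1/U(t)$; a direct calculation gives the affine expressions $c(t)=\tfrac{1+t}{2a}+\tfrac{1-t}{2b}$ and $\tilde c(t)=\tfrac{1-t}{2a}+\tfrac{1+t}{2b}$, whence $c(t)+\tilde c(t)=\tfrac1a+\tfrac1b$, $c(t)\ge\tilde c(t)$ on $[0,1]$, and $c(t)-\tfrac1b=\tfrac1a-\tilde c(t)=\tfrac{1+t}{2}\cdot\tfrac{b-a}{ab}$, $\tilde c(t)-\tfrac1b=\tfrac1a-c(t)=\tfrac{1-t}{2}\cdot\tfrac{b-a}{ab}$. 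The key point is that $g$ harmonically symmetric about $H$ is the same as $x\mapsto g(1/x)$ being symmetric about the midpoint $\tfrac1H=\tfrac12(\tfrac1a+\tfrac1b)$ of $[\tfrac1b,\tfrac1a]$; applying the reflection $x\mapsto\tfrac1a+\tfrac1b-x$ to the integral over $[\tfrac1b,c(t)]$ shows $\int_{1/b}^{c(t)}\Phi(x)\,dx=h(U(t))$, so $h(b)=h(L(t))+h(U(t))$ and therefore $2h(L(t))-h(b)=h(L(t))-h(U(t))$, and symmetrically $2h(U(t))-h(b)=h(U(t))-h(L(t))$. Hence $\bigl|2h(L(t))-h(b)\bigr|=\int_{\tilde c(t)}^{c(t)}\Phi(x)\,dx\le\|g\|_{\infty}\int_{\tilde c(t)}^{c(t)}\bigl[(x-\tfrac1b)^{\alpha-1}+(\tfrac1a-x)^{\alpha-1}\bigr]dx$, and the last integral equals $\tfrac2\alpha\bigl(\tfrac{b-a}{2ab}\bigr)^{\alpha}\bigl[(1+t)^{\alpha}-(1-t)^{\alpha}\bigr]$ by the endpoint identities above. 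Substituting this estimate into the definitions in Lemma 3 produces $\zeta_i(a,b)\le\tfrac{2\|g\|_{\infty}}{\alpha}\bigl(\tfrac{b-a}{2ab}\bigr)^{\alpha}C_i(\alpha)$ for $i=1,2,3$; feeding this into Theorem 7 and simplifying via $\alpha\Gamma(\alpha)=\Gamma(\alpha+1)$ collapses the constants into the prefactor of (\ref{2.9}). For $0<\alpha\le1$, inequality (\ref{2.10}) then follows by invoking Lemma 1 with $\theta=\alpha$, which gives $(1+t)^{\alpha}-(1-t)^{\alpha}\le(2t)^{\alpha}=2^{\alpha}t^{\alpha}$; this replaces each $C_i(\alpha)$ by the simpler integral in (\ref{2.10}) and turns the factor $2^{\alpha+1}$ in the denominator into $2$.

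I expect the main obstacle to be the bookkeeping forced by the symmetry hypothesis: checking that the reflection $x\mapsto\tfrac1a+\tfrac1b-x$ genuinely collapses $h(b)$ into $h(L(t))+h(U(t))$, tracking the ordering $c(t)\ge\tilde c(t)$ so that the absolute values emerge with the correct sign, and pairing the two one-sided Hadamard fractional integrals $J_{1/b^{+}}^{\alpha}$ and $J_{1/a^{-}}^{\alpha}$ with the two power terms $(\tfrac1a-x)^{\alpha-1}$ and $(x-\tfrac1b)^{\alpha-1}$ in $h$. The remaining ingredients — differentiating $h$, the single change of variables $x\mapsto1/x$, the elementary integral of the two power functions, and the final constant chase — are routine.
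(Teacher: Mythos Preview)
Your proposal is correct and follows essentially the same route as the paper: substitute the specific $h$ into Theorem~7, use the harmonic symmetry of $g$ (equivalently, the ordinary symmetry of $g\circ\varphi$ about the midpoint of $[1/b,1/a]$) to reduce $|2h(L(t))-h(b)|$ and $|2h(U(t))-h(b)|$ to the single integral $\int_{1/U(t)}^{1/L(t)}\Phi$, bound $g$ by $\|g\|_{\infty}$, evaluate the power integral to get the factor $(1+t)^{\alpha}-(1-t)^{\alpha}$, and then apply Lemma~1 for the $0<\alpha\le 1$ refinement. Your treatment is in fact more explicit than the paper's --- you write out the affine forms of $1/L(t)$ and $1/U(t)$ and prove the identity $h(b)=h(L(t))+h(U(t))$ cleanly via the reflection $x\mapsto \tfrac1a+\tfrac1b-x$, whereas the paper simply asserts equalities (2.12)--(2.13) ``since $g$ is symmetric'' --- but the argument is the same.
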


\begin{proof}
By left side of inequality (\ref{2.8}) in Teorem 7, when we write $%
h(t)=\int\limits_{1/t}^{1/a}\left[ \left( x-\frac{1}{b}\right) ^{\alpha
-1}+\left( \frac{1}{a}-x\right) ^{\alpha -1}\right] g\circ \varphi (x)dx$
for all $x\in \lbrack 1/b,1/a]$ and $\varphi (x)=1/x$, we have

\begin{equation*}
\left\vert 
\begin{array}{c}
\Gamma (\alpha )\left( \frac{f(a)+f(b)}{2}\right) \left[ J_{1/b^{+}}^{\alpha
}g\circ \varphi (1/a)+J_{1/a^{-}}^{\alpha }g\circ \varphi (1/b)\right] \\ 
-\Gamma (\alpha )\left[ J_{1/b^{+}}^{\alpha }\left( fg\circ \varphi \right)
\left( 1/a\right) +J_{1/a^{-}}^{\alpha }\left( fg\circ \varphi \right)
\left( 1/b\right) \right]%
\end{array}%
\right\vert
\end{equation*}%
.

On the other hand, right side of inequality (\ref{2.8})

\begin{equation}
\leq \frac{b-a}{4ab}\left\{ \int\limits_{0}^{1}\left\vert 
\begin{array}{c}
2\int\limits_{1/L(t)}^{1/a}\left[ \left( x-\frac{1}{b}\right) ^{\alpha
-1}+\left( \frac{1}{a}-x\right) ^{\alpha -1}\right] g\circ \varphi (x)dx \\ 
-\int\limits_{1/b}^{1/a}\left[ \left( x-\frac{1}{b}\right) ^{\alpha
-1}+\left( \frac{1}{a}-x\right) ^{\alpha -1}\right] g\circ \varphi (x)dx%
\end{array}%
\right\vert \left\{ t\left\vert f^{\prime }(H)\right\vert +(1-t)\left\vert
f^{\prime }(a)\right\vert \right\} (L(t))^{2}dt\ \right.  \label{2.11}
\end{equation}%
\begin{equation*}
+\left. \int\limits_{0}^{1}\left\vert 
\begin{array}{c}
2\int\limits_{1/U(t)}^{1/a}\left[ \left( x-\frac{1}{b}\right) ^{\alpha
-1}+\left( \frac{1}{a}-x\right) ^{\alpha -1}\right] g\circ \varphi (x)dx \\ 
-\int\limits_{1/b}^{1/a}\left[ \left( x-\frac{1}{b}\right) ^{\alpha
-1}+\left( \frac{1}{a}-x\right) ^{\alpha -1}\right] g\circ \varphi (x)dx%
\end{array}%
\right\vert \left\{ t\left\vert f^{\prime }(H)\right\vert +(1-t)\left\vert
f^{\prime }(b)\right\vert \right\} (U(t))^{2}dt\right\}
\end{equation*}%
.

Since $g(x)$ is symmetric to $x=\frac{2ab}{a+b}$, we have 
\begin{equation}
\left\vert 2\int\limits_{1/L(t)}^{1/a}\left[ \left( x-\frac{1}{b}\right)
^{\alpha -1}+\left( \frac{1}{a}-x\right) ^{\alpha -1}\right] g\circ \varphi
(x)dx-\int\limits_{1/b}^{1/a}\left[ \left( x-\frac{1}{b}\right) ^{\alpha
-1}+\left( \frac{1}{a}-x\right) ^{\alpha -1}\right] \left( g\circ \varphi
\right) (x)dx\right\vert  \label{2.12}
\end{equation}%
\begin{equation*}
\ \ \ \ \ \ \ \ \ \ \ \ \ \ \ \ \ \ \ \ \ \ \ \ \ \ \ \ \ \ \ \ \ \ \ \ \ \
\ \ \ \ \ \ \ \ \ \ \ \ \ \ \ \ \ \ \ \ \ \ \ \ \ \ \ \ \ \ \ \ \ \ \ \ \ \
\ \ \ \ \ \ \ \ \ \ \ \ \ \ \ =\left\vert \int\limits_{1/U(t)}^{1/L(t)}\left[
\left( x-\frac{1}{b}\right) ^{\alpha -1}+\left( \frac{1}{a}-x\right)
^{\alpha -1}\right] \left( g\circ \varphi \right) (x)dx\right\vert
\end{equation*}%
and

\begin{equation}
\left\vert 2\int\limits_{1/U(t)}^{1/a}\left[ \left( x-\frac{1}{b}\right)
^{\alpha -1}+\left( \frac{1}{a}-x\right) ^{\alpha -1}\right] g\circ \varphi
(x)dx-\int\limits_{1/b}^{1/a}\left[ \left( x-\frac{1}{b}\right) ^{\alpha
-1}+\left( \frac{1}{a}-x\right) ^{\alpha -1}\right] \left( g\circ \varphi
\right) (x)dx\right\vert  \label{2.13}
\end{equation}%
\begin{equation*}
\ \ \ \ \ \ \ \ \ \ \ \ \ \ \ \ \ \ \ \ \ \ \ \ \ \ \ \ \ \ \ \ \ \ \ \ \ \
\ \ \ \ \ \ \ \ \ \ \ \ \ \ \ \ \ \ \ \ \ \ \ \ \ \ \ \ \ \ \ \ \ \ \ \ \ \
\ \ \ \ \ \ \ \ \ \ \ \ \ \ \ =\left\vert \int\limits_{1/U(t)}^{1/L(t)}\left[
\left( x-\frac{1}{b}\right) ^{\alpha -1}+\left( \frac{1}{a}-x\right)
^{\alpha -1}\right] \left( g\circ \varphi \right) (x)dx\right\vert
\end{equation*}

for all $t\in [0,1] $.

By (\ref{2.11})- (\ref{2.13}), we have

\begin{equation}
\left\vert \left( \frac{f(a)+f(b)}{2}\right) \left[ J_{1/b^{+}}^{\alpha
}g\circ \varphi (1/a)+J_{1/a^{-}}^{\alpha }g\circ \varphi (1/b)\right] -%
\left[ J_{1/b^{+}}^{\alpha }\left( fg\circ \varphi \right) \left( 1/a\right)
+J_{1/a^{-}}^{\alpha }\left( fg\circ \varphi \right) \left( 1/b\right) %
\right] \right\vert  \label{2.14}
\end{equation}%
\begin{equation*}
\leq \frac{b-a}{4ab\Gamma \left( \alpha \right) }\left\{
\int\limits_{0}^{1}\left\vert \left[ \int\limits_{1/U(t)}^{1/L(t)}\left( x-%
\frac{1}{b}\right) ^{\alpha -1}+\left( \frac{1}{a}-x\right) ^{\alpha -1}%
\right] g\circ \varphi (x)dx\right\vert \left\{ t\left\vert f^{\prime
}(H)\right\vert +(1-t)\left\vert f^{\prime }(a)\right\vert \right\}
(L(t))^{2}dt\ \right.
\end{equation*}%
\begin{equation*}
\ \ \ \ \ \ \ \ \ \ \ \ \ \ \ \ \ \ \ \ \ \ \ \ \ +\left.
\int\limits_{0}^{1}\left\vert \int\limits_{1/U(t)}^{1/L(t)}\left[ \left( x-%
\frac{1}{b}\right) ^{\alpha -1}+\left( \frac{1}{a}-x\right) ^{\alpha -1}%
\right] g\circ \varphi (x)dx\right\vert \left\{ t\left\vert f^{\prime
}(H)\right\vert +(1-t)\left\vert f^{\prime }(b)\right\vert \right\}
(U(t))^{2}dt\right\}
\end{equation*}%
\begin{equation*}
\leq \frac{\left( b-a\right) \left\Vert g\right\Vert _{\infty }}{4ab\Gamma
\left( \alpha \right) }\left\{ \int\limits_{0}^{1}\left[ \int%
\limits_{1/U(t)}^{1/L(t)}\left\vert \left( x-\frac{1}{b}\right) ^{\alpha
-1}+\left( \frac{1}{a}-x\right) ^{\alpha -1}\right\vert dx\right] \left\{
t\left\vert f^{\prime }(H)\right\vert +(1-t)\left\vert f^{\prime
}(a)\right\vert \right\} (L(t))^{2}dt\ \right.
\end{equation*}%
\begin{equation*}
\ \ \ \ \ \ \ \ \ \ \ \ \ \ \ \ \ \ \ \ \ \ \ \ \ +\left.
\int\limits_{0}^{1} \left[ \int\limits_{1/U(t)}^{1/L(t)}\left\vert \left( x-%
\frac{1}{b}\right) ^{\alpha -1}+\left( \frac{1}{a}-x\right) ^{\alpha
-1}\right\vert dx\right] \left\{ t\left\vert f^{\prime }(H)\right\vert
+(1-t)\left\vert f^{\prime }(b)\right\vert \right\} (U(t))^{2}dt\right\} .
\end{equation*}%
In the last inequality,

\begin{equation}
\int\limits_{1/U(t)}^{1/L(t)}\left\vert \left( x-\frac{1}{b}\right) ^{\alpha
-1}+\left( \frac{1}{a}-x\right) ^{\alpha -1}\right\vert
dx=\int\limits_{1/U(t)}^{1/L(t)}\left( x-\frac{1}{b}\right) ^{\alpha
-1}dx+\int\limits_{1/U(t)}^{1/L(t)}\left( \frac{1}{a}-x\right) ^{\alpha -1}dx
\label{2.15}
\end{equation}%
\begin{equation*}
\ \ \ \ \ \ \ \ \ \ \ \ \ \ \ \ \ \ \ \ \ \ \ \ \ \ \ \ \ \ \ \ \ \ \ \ \ \
\ \ \ \ \ \ \ \ \ \ \ \ \ \ \ \ \ =\frac{2^{1-\alpha }}{\alpha }\left( \frac{%
b-a}{ab}\right) ^{\alpha }\left\{ \left( 1+t\right) ^{\alpha }-\left(
1-t\right) ^{\alpha }\right\} .
\end{equation*}

By Lemma 1, we have 
\begin{equation*}
\int\limits_{1/U(t)}^{1/L(t)}\left\vert \left( x-\frac{1}{b}\right) ^{\alpha
-1}+\left( \frac{1}{a}-x\right) ^{\alpha -1}\right\vert
dx=\int\limits_{1/U(t)}^{1/L(t)}\left( x-\frac{1}{b}\right) ^{\alpha
-1}dx+\int\limits_{1/U(t)}^{1/L(t)}\left( \frac{1}{a}-x\right) ^{\alpha -1}dx
\end{equation*}%
\begin{equation*}
\ \ \ \ \ \ \ \ \ \ \ \ \ \ \ \ \ \ \ \ \ \ \ \ \ \ \ \ \ \ \ \ \ \ \ \ \ \
\ \ \ \ \ \ \ \ \ \ \ \ \ \ \ \ \ \leq \frac{2}{\alpha }\left( \frac{b-a}{ab}%
\right) ^{\alpha }t^{\alpha }
\end{equation*}

A combination of (\ref{2.14}) and (\ref{2.15}), we have (\ref{2.9}). This
complete is proof.
\end{proof}

\begin{corollary}
In Corollary 1,

(1)If $\alpha =1$ is in corollary, we obtain following
Hermite-Hadamard-Fejer Type inequality for harmonically-convex function
which is related the left-hand side of (\ref{2.10}):

\begin{equation}
\left\vert \left[ \frac{f(a)+f(b)}{2}\right] \underset{a}{\overset{b}{\int }}%
\frac{g(x)}{x^{2}}dx-\underset{a}{\overset{b}{\int }}f(x)\frac{g(x)}{x^{2}}%
dx\right\vert \leq  \label{2.16}
\end{equation}%
\begin{equation*}
\frac{\left( b-a\right) ^{2}}{4(ab)^{2}}\left\Vert g\right\Vert _{\infty }%
\left[ C_{1}(1)\left\vert f^{\prime }\left( a\right) \right\vert
+C_{2}(1)\left\vert f^{\prime }\left( H\right) \right\vert
+C_{3}(1)\left\vert f^{\prime }\left( b\right) \right\vert \right]
\end{equation*}%
where for $a,b,H>0$, we have 
\begin{equation*}
C_{1}\left( 1\right) =\int\limits_{0}^{1}(1-t)t(L(t))^{2}dt
\end{equation*}%
\begin{equation*}
C_{2}\left( 1\right) =\int\limits_{0}^{1}t^{2}\left[ (L(t))^{2}+(U(t))^{2}%
\right] dt
\end{equation*}%
\begin{equation*}
C_{3}\left( 1\right) =\int\limits_{0}^{1}(1-t)t(U(t))^{2}dt
\end{equation*}

(2)If $g(x)=1 $ is in corollary, we obtain following Hermite-Hadamard-Fejer
Type inequality for harmonically-convex function which is related the
left-hand side of (\ref{2.9}):

\begin{equation}
\left\vert \left( \frac{f(a)+f(b)}{2}\right) -\frac{\left( ab\right)
^{\alpha }\Gamma (\alpha +1)}{2(b-a)^{\alpha }}\left[ J_{1/b^{+}}^{\alpha
}\left( f\circ \varphi \right) \left( 1/a\right) +J_{1/a^{-}}^{\alpha
}\left( f\circ \varphi \right) \left( 1/b\right) \right] \right\vert
\label{2.17}
\end{equation}%
\begin{equation*}
\leq \frac{\left( b-a\right) }{2^{\alpha +2}ab}\left[ C_{1}\left( \alpha
\right) \left\vert f^{\prime }(a)\right\vert +C_{2}\left( \alpha \right)
\left\vert f^{\prime }(H)\right\vert +C_{3}\left( \alpha \right) \left\vert
f^{\prime }(b)\right\vert \right] .
\end{equation*}

(3)If $g(x)=1$ and $\alpha =1$ is in corollary, we obtain following
Hermite-Hadamard-Fejer Type inequality for harmonically-convex function
which is related the left-hand side of (\ref{2.10}):

\begin{equation}
\left\vert \left( \frac{f(a)+f(b)}{2}\right) -\frac{ab}{(b-a)}%
\int\limits_{a}^{b}\frac{f(x)}{x^{2}}dx\right\vert  \label{2.18}
\end{equation}%
\begin{equation*}
\leq \frac{\left( b-a\right) }{4(ab)}\left[ C_{1}\left( 1\right) \left\vert
f^{\prime }(a)\right\vert +C_{2}\left( 1\right) \left\vert f^{\prime
}(H)\right\vert +C_{3}\left( 1\right) \left\vert f^{\prime }(b)\right\vert %
\right] .
\end{equation*}
\end{corollary}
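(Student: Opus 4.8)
The plan is to obtain all three inequalities purely by specializing the general estimate of Corollary 1; no new analytic ingredient is needed beyond evaluating the Hadamard fractional integrals in the degenerate cases $\alpha=1$ and $g\equiv 1$, since harmonic convexity of $\left\vert f^{\prime }\right\vert$ and Lemma 1 have already been exploited inside Corollary 1.

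For part (1), I would put $\alpha=1$ in inequality (\ref{2.10}). Because $J_{c^{+}}^{1}F(x)=\int_{c}^{x}F$ and $J_{c^{-}}^{1}F(x)=\int_{x}^{c}F$, each Hadamard integral collapses to an ordinary integral over $[1/b,1/a]$; performing the substitution $x\mapsto 1/x$ (which, with $\varphi (x)=1/x$, sends $dx$ to $dx/x^{2}$) turns both $J_{1/b^{+}}^{1}(g\circ\varphi)(1/a)$ and $J_{1/a^{-}}^{1}(g\circ\varphi)(1/b)$ into $\int_{a}^{b}g(x)/x^{2}\,dx$, and likewise for $fg$. Hence each bracketed sum in (\ref{2.10}) equals twice the corresponding classical weighted integral, $\Gamma(\alpha+1)=\Gamma(2)=1$, and after cancelling the common factor $2$ the left-hand side of (\ref{2.10}) becomes exactly the left-hand side of (\ref{2.16}), while the right-hand constant $\tfrac{(b-a)^{2}\Vert g\Vert_{\infty }}{2(ab)^{2}}$ is halved to $\tfrac{(b-a)^{2}\Vert g\Vert_{\infty }}{4(ab)^{2}}$. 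Putting $\alpha=1$ in the formulas for $C_{1}(\alpha),C_{2}(\alpha),C_{3}(\alpha)$ from (\ref{2.10}) produces the stated coefficients $C_{i}(1)$.

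For part (2), I would put $g\equiv 1$ in inequality (\ref{2.9}), so that $\Vert g\Vert_{\infty }=1$ and
\begin{equation*}
J_{1/b^{+}}^{\alpha }(g\circ\varphi)(1/a)=\frac{1}{\Gamma(\alpha)}\int_{1/b}^{1/a}\left( \frac{1}{a}-x\right) ^{\alpha -1}dx=\frac{1}{\Gamma(\alpha+1)}\left( \frac{b-a}{ab}\right) ^{\alpha },
\end{equation*}
the same value being obtained for $J_{1/a^{-}}^{\alpha }(g\circ\varphi)(1/b)$; thus the weight bracket on the left of (\ref{2.9}) equals $\tfrac{2}{\Gamma(\alpha+1)}\left( \tfrac{b-a}{ab}\right) ^{\alpha }$. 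Dividing both sides of (\ref{2.9}) by this positive quantity, the left-hand side becomes $\left\vert \tfrac{f(a)+f(b)}{2}-\tfrac{(ab)^{\alpha }\Gamma(\alpha+1)}{2(b-a)^{\alpha }}\left[ J_{1/b^{+}}^{\alpha }(f\circ\varphi)(1/a)+J_{1/a^{-}}^{\alpha }(f\circ\varphi)(1/b)\right] \right\vert$ and the right-hand constant collapses from $\tfrac{(b-a)^{\alpha +1}}{2^{\alpha +1}(ab)^{\alpha +1}\Gamma(\alpha+1)}$ to $\tfrac{b-a}{2^{\alpha +2}ab}$, which is (\ref{2.17}).

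Finally, part (3) is the common specialization $g\equiv 1$, $\alpha=1$: taking $g\equiv 1$ in part (1) (equivalently $\alpha=1$ in part (2)), the weight $\int_{a}^{b}g(x)/x^{2}\,dx=\tfrac{1}{a}-\tfrac{1}{b}=\tfrac{b-a}{ab}$ is divided out on the left to leave $\left\vert \tfrac{f(a)+f(b)}{2}-\tfrac{ab}{b-a}\int_{a}^{b}f(x)/x^{2}\,dx\right\vert$, and the right-hand constant becomes $\tfrac{ab}{b-a}\cdot\tfrac{(b-a)^{2}}{4(ab)^{2}}=\tfrac{b-a}{4ab}$, yielding (\ref{2.18}). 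I expect the only genuine work here to be clerical: tracking the powers of $2$, the Gamma factors, and the factors $\left( \tfrac{b-a}{ab}\right) ^{\alpha }$ through each division so that the right-hand normalizing constant comes out exactly as written, and checking that in each part the coefficients $C_{1},C_{2},C_{3}$ are those inherited from the corresponding line of Corollary 1 (the $t^{\alpha }$ form of (\ref{2.10}) for parts (1) and (3), the $(1+t)^{\alpha }-(1-t)^{\alpha }$ form of (\ref{2.9}) for part (2)). There is no analytic obstacle.
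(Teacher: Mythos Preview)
Your proposal is correct and follows exactly the route the paper intends: Corollary~2 is stated without proof because each part is obtained from Corollary~1 by the indicated specialization ($\alpha=1$ in (\ref{2.10}) for part~(1), $g\equiv 1$ in (\ref{2.9}) for part~(2), and both for part~(3)), and you have carried out precisely those substitutions and the resulting bookkeeping of the constants. There is nothing to add.
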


\begin{theorem}
Let $f:I\subseteq 
\mathbb{R}
\backslash \{0\}\longrightarrow 
\mathbb{R}
$ be differentiable mapping $I^{o}$, where $a,b\in I$ with $a<b$. If the
mapping $\left\vert f^{\prime }\right\vert ^{q}$ is harmonically-convex on $%
\left[ a,b\right] $, then the following inequality holds:

\begin{equation}
\left\vert \left[ h(b)-2h(a)\right] \frac{f(a)}{2}+h(b)\frac{f(b)}{2}%
-\int\limits_{a}^{b}f(x)h^{\prime }(x)dx\right\vert  \label{2.19}
\end{equation}%
\begin{equation*}
\leq \frac{b-a}{4ab}\left\{ 
\begin{array}{c}
\left( \underset{0}{\int\limits^{1}}\left\vert 2h(L(t))-h(b)\right\vert
dt\right) ^{1-\frac{1}{q}}\times \\ 
\left( \underset{0}{\overset{1}{\int }}%
\begin{array}{c}
\left( \left\vert 2h(L(t))-h(b)\right\vert dt\right) \\ 
\times \left( t\left( L(t)\right) ^{2q}\left\vert f^{\prime }(a)\right\vert
^{q}+(1-t)\left( L(t)\right) ^{2q}\left\vert f^{\prime }(H)\right\vert
^{q}\right)%
\end{array}%
\right) ^{\frac{1}{q}}%
\end{array}%
\right.
\end{equation*}%
\begin{equation*}
\ \ \ \ \ \ \ \ \ \ \ \ \ \ \ \ \ \ \ \ \ \ \ \ \ \ \ \ \ \ \ \ \ \ \ \ \ \
\left. 
\begin{array}{c}
+\ \left( \underset{0}{\int\limits^{1}}\left\vert 2h(U(t))-h(b)\right\vert
dt\right) ^{1-\frac{1}{q}}\times \\ 
\left( \underset{0}{\overset{1}{\int }}%
\begin{array}{c}
\left( \left\vert 2h(U(t))-h(b)\right\vert dt\right) \\ 
\times \left( t\left( U(t)\right) ^{2q}\left\vert f^{\prime }(b)\right\vert
^{q}+(1-t)\left( U(t)\right) ^{2q}\left\vert f^{\prime }(H)\right\vert
^{q}\right)%
\end{array}%
\right) ^{\frac{1}{q}}%
\end{array}%
\right\}
\end{equation*}
\end{theorem}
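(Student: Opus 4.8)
The plan is to mimic the proof of Theorem 7, but replace the crude bound $|f'(L(t))(L(t))^2|\le\{t|f'(H)|+(1-t)|f'(a)|\}(L(t))^2$ combined with direct integration by a H\"older-type splitting. Starting from the identity (\ref{2.1}) in Lemma 2 and the triangle inequality, we obtain
\begin{equation*}
\left\vert\left[h(b)-2h(a)\right]\tfrac{f(a)}{2}+h(b)\tfrac{f(b)}{2}-\int_a^b f(x)h'(x)dx\right\vert\le\frac{b-a}{4ab}\left\{\int_0^1\left\vert 2h(L(t))-h(b)\right\vert\left\vert f'(L(t))\right\vert(L(t))^2dt+\int_0^1\left\vert 2h(U(t))-h(b)\right\vert\left\vert f'(U(t))\right\vert(U(t))^2dt\right\}.
\end{equation*}
I would treat the two integrals symmetrically. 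For the first one, I write $|2h(L(t))-h(b)|(L(t))^2|f'(L(t))|$ as the product of $|2h(L(t))-h(b)|^{1-1/q}$ and $\bigl(|2h(L(t))-h(b)|(L(t))^{2q}|f'(L(t))|^q\bigr)^{1/q}$, then apply the H\"older inequality with exponents $p=q/(q-1)$ and $q$ (the power-mean form), yielding
\begin{equation*}
\int_0^1\left\vert 2h(L(t))-h(b)\right\vert(L(t))^2\left\vert f'(L(t))\right\vert dt\le\left(\int_0^1\left\vert 2h(L(t))-h(b)\right\vert dt\right)^{1-\frac1q}\left(\int_0^1\left\vert 2h(L(t))-h(b)\right\vert(L(t))^{2q}\left\vert f'(L(t))\right\vert^q dt\right)^{\frac1q}.
\end{equation*}

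Next I would insert the harmonic convexity of $|f'|^q$. Since $L(t)=\tfrac{aH}{tH+(1-t)a}$ is exactly the harmonic-convexity parametrization joining $a$ (at $t=0$) and $H$ (at $t=1$), the hypothesis gives $|f'(L(t))|^q\le t|f'(H)|^q+(1-t)|f'(a)|^q$; similarly $U(t)$ joins $b$ and $H$, so $|f'(U(t))|^q\le t|f'(H)|^q+(1-t)|f'(b)|^q$. Substituting these two pointwise bounds into the respective second factors of the H\"older estimates and reassembling the two pieces with the prefactor $\tfrac{b-a}{4ab}$ produces exactly the right-hand side displayed in (\ref{2.19}). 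This is essentially the same mechanism as in Theorem 7, only with the averaging done after the H\"older split rather than before.

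The only genuinely delicate point is the bookkeeping: one must be careful that the weight $|2h(L(t))-h(b)|$ (respectively $|2h(U(t))-h(b)|$) is the correct first-factor measure so that the exponents $1-\tfrac1q$ and $\tfrac1q$ recombine to the single power $1$, and that the distribution of $t$ versus $(1-t)$ against $|f'(a)|^q$, $|f'(H)|^q$, $|f'(b)|^q$ matches the endpoints of $L$ and $U$ correctly — note in particular that the $H$-term pairs with $t$ on both pieces while $|f'(a)|$ and $|f'(b)|$ pair with $(1-t)$, which is consistent with $L(0)=a$, $L(1)=H$, $U(0)=b$, $U(1)=H$. No convergence or integrability issue arises beyond $f'\in L[a,b]$ and continuity of $h$, since $L(t),U(t)$ stay in $[a,b]$ and all integrands are bounded. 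Hence the proof is a short chain: identity $\Rightarrow$ triangle inequality $\Rightarrow$ H\"older (power-mean) split on each integral $\Rightarrow$ harmonic convexity of $|f'|^q$ $\Rightarrow$ reassemble.
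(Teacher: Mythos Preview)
Your approach is exactly the paper's: the paper's entire proof reads ``Continuing from (2.7) in Theorem~7, we use H\"older Inequality and we use that $|f'|^{q}$ is harmonically-convex,'' which is precisely the chain identity $\Rightarrow$ triangle inequality $\Rightarrow$ H\"older split with weight $|2h(\cdot)-h(b)|$ $\Rightarrow$ harmonic convexity that you spell out.

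One bookkeeping slip, however, in the very place you flagged as delicate: from $L(t)=\dfrac{aH}{tH+(1-t)a}$ one gets $L(0)=H$ and $L(1)=a$ (and likewise $U(0)=H$, $U(1)=b$), the reverse of what you wrote. With $x=H$, $y=a$ in Definition~\ref{dfn2} the correct bound is $|f'(L(t))|^{q}\le t|f'(a)|^{q}+(1-t)|f'(H)|^{q}$, and similarly $|f'(U(t))|^{q}\le t|f'(b)|^{q}+(1-t)|f'(H)|^{q}$; this is what actually matches the displayed right-hand side of (\ref{2.19}), where $t$ multiplies $|f'(a)|^{q}$ and $|f'(b)|^{q}$, not $|f'(H)|^{q}$. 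Swap your $t$ and $(1-t)$ and your argument goes through verbatim.
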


\begin{proof}
Continuing from (\ref{2.7}) in Theorem 7, we use H\"{o}lder Inequality and
we use that $\left\vert f^{\prime }\right\vert ^{q}$ is harmonically-convex.
Thus this proof is complete.
\end{proof}

\begin{corollary}
Let $h(t)=\int\limits_{1/t}^{1/a}\left[ \left( x-\frac{1}{b}\right) ^{\alpha
-1}+\left( \frac{1}{a}-x\right) ^{\alpha -1}\right] \left( g\circ \varphi
\right) (x)dx$ for all $t\in \lbrack a,b]$ and $g:\left[ a,b\right]
\longrightarrow \left[ 0,\infty \right) $ be continuous positive mapping and
symmetric to $\frac{2ab}{a+b}$in Teorem 8, we obtain:

\begin{equation}
\left\vert \left( \frac{f(a)+f(b)}{2}\right) \left[ J_{1/b^{+}}^{\alpha
}\left( g\circ \varphi \right) (1/a)+J_{1/a^{-}}^{\alpha }\left( g\circ
\varphi \right) (1/b)\right] -\left[ J_{1/b^{+}}^{\alpha }\left( fg\circ
\varphi \right) \left( 1/a\right) +J_{1/a^{-}}^{\alpha }\left( fg\circ
\varphi \right) \left( 1/b\right) \right] \right\vert  \label{2.20}
\end{equation}

\begin{equation*}
\leq \frac{(b-a)^{\alpha +1}\left\Vert g\right\Vert _{\infty }}{2^{\alpha
+1}(ab)^{\alpha +1}\Gamma \left( \alpha +1\right) }\left( \frac{%
2^{2}(2^{\alpha }-1)}{\alpha +1}\right) ^{1-\frac{1}{q}}\left[ C_{1}\left(
\alpha ,q\right) \left\vert f^{\prime }(a)\right\vert ^{q}+C_{2}\left(
\alpha ,q\right) \left\vert f^{\prime }(H)\right\vert ^{q}+C_{3}\left(
\alpha ,q\right) \left\vert f^{\prime }(b)\right\vert ^{q}\right] ^{\frac{1}{%
q}}
\end{equation*}%
where for $q>1$

\begin{equation*}
C_{1}\left( \alpha ,q\right) =\overset{1}{\underset{0}{\int }}\left[
(1+t)^{\alpha }-(1-t)^{\alpha }\right] t\left( L(t)\right) ^{2q}dt
\end{equation*}%
\begin{equation*}
C_{2}\left( \alpha ,q\right) =\overset{1}{\underset{0}{\int }}\left[
(1+t)^{\alpha }-(1-t)^{\alpha }\right] (1-t)\left( \left( L(t)\right)
^{2q}+\left( U(t)\right) ^{2q}\right) dt
\end{equation*}%
\begin{equation*}
C_{3}\left( \alpha ,q\right) =\overset{1}{\underset{0}{\int }}\left[
(1+t)^{\alpha }-(1-t)^{\alpha }\right] t\left( U(t)\right) ^{2q}dt.
\end{equation*}

\begin{proof}
Continuing from (\ref{2.15}) of Corollary 1 and (\ref{2.19}) in Theorem 8,

\begin{equation}  \label{2.21}
\left\vert \left( \frac{f(a)+f(b)}{2}\right) \left[ J_{a^{+}}^{\alpha
}g(b)+J_{b^{-}}^{\alpha }g(a)\right] -\left[ J_{a^{+}}^{\alpha }\left(
fg\right) \left( b\right) +J_{b^{-}}^{\alpha }\left( fg\right) \left(
a\right) \right] \right\vert
\end{equation}

\begin{equation*}
\leq \frac{\left( b-a\right) ^{\alpha +1}}{2^{\alpha +1}\Gamma \left( \alpha
+1\right) }\left\{ 
\begin{array}{c}
\left( \underset{0}{\overset{1}{\int }}\left[ (1+t)^{\alpha }-(1-t)^{\alpha }%
\right] dt\right) ^{1-\frac{1}{q}}\times \\ 
\left( \underset{0}{\overset{1}{\int }}\left[ (1+t)^{\alpha }-(1-t)^{\alpha }%
\right] \left( t\left( L(t)\right) ^{2q}\left\vert f^{\prime }(a)\right\vert
^{q}+(1-t)\left( L(t)\right) ^{2q}\left\vert f^{\prime }(H)\right\vert
^{q}\right) dt\right) ^{\frac{1}{q}}%
\end{array}%
\right.
\end{equation*}%
\begin{equation*}
\ \ \ \ \ \ \ \ \ \ \ \ \ \ \ \ \ \ \ \ \ \ \ \ \ \ \ \ \ \ \ \ +\left. 
\begin{array}{c}
\left( \underset{0}{\overset{1}{\int }}\left[ (1+t)^{\alpha }-(1-t)^{\alpha }%
\right] dt\right) ^{1-\frac{1}{q}}\times \\ 
\left( \underset{0}{\overset{1}{\int }}\left[ (1+t)^{\alpha }-(1-t)^{\alpha }%
\right] \left( t\left( U(t)\right) ^{2q}\left\vert f^{\prime }(b)\right\vert
^{q}+(1-t)\left( U(t)\right) ^{2q}\left\vert f^{\prime }(H)\right\vert
^{q}\right) dt\right) ^{\frac{1}{q}}%
\end{array}%
\right\}
\end{equation*}%
\begin{equation*}
\leq \frac{(b-a)^{\alpha +1}\left\Vert g\right\Vert _{\infty }}{2^{\alpha
+1}(ab)^{\alpha +1}\Gamma \left( \alpha +1\right) }\left( \frac{2^{\alpha
+1}-2}{\alpha +1}\right) ^{1-\frac{1}{q}}\left[ 
\begin{array}{c}
\left( \underset{0}{\overset{1}{\int }}%
\begin{array}{c}
\left[ (1+t)^{\alpha }-(1-t)^{\alpha }\right] \times \\ 
\left[ t\left( L(t)\right) ^{2q}\left\vert f^{\prime }(a)\right\vert
^{q}+(1-t)\left( L(t)\right) ^{2q}\left\vert f^{\prime }(H)\right\vert ^{q}%
\right]%
\end{array}%
dt\right) ^{\frac{1}{q}} \\ 
+\left( \underset{0}{\overset{1}{\int }}%
\begin{array}{c}
\left[ (1+t)^{\alpha }-(1-t)^{\alpha }\right] \times \\ 
\left[ t\left( U(t)\right) ^{2q}\left\vert f^{\prime }(b)\right\vert
^{q}+(1-t)\left( U(t)\right) ^{2q}\left\vert f^{\prime }(H)\right\vert ^{q}%
\right]%
\end{array}%
dt\right) ^{\frac{1}{q}}%
\end{array}%
\right]
\end{equation*}

By the power-mean inequality $\left( a^{r}+b^{r}<2^{1-r}\left( a+b\right)
^{r}for\quad a>0,b>0,\quad r<1\right) $ and $\frac{1}{p}+\frac{1}{q}=1$ we
have

\begin{equation}
\leq \frac{(b-a)^{\alpha +1}\left\Vert g\right\Vert _{\infty }}{2^{\alpha
+1}(ab)^{\alpha +1}\Gamma \left( \alpha +1\right) }\left( \frac{%
2^{2}(2^{\alpha }-1)}{\alpha +1}\right) ^{\frac{1}{p}}\left[ \overset{1}{%
\underset{0}{\int }}\left( 
\begin{array}{c}
\left[ (1+t)^{\alpha }-(1-t)^{\alpha }\right] t\left( L(t)\right)
^{2q}\left\vert f^{\prime }(a)\right\vert ^{q}+ \\ 
\left[ (1+t)^{\alpha }-(1-t)^{\alpha }\right] (1-t)\left( 
\begin{array}{c}
\left( L(t)\right) ^{2q} \\ 
+\left( U(t)\right) ^{2q}%
\end{array}%
\right) \left\vert f^{\prime }(H)\right\vert ^{q} \\ 
+\left[ (1+t)^{\alpha }-(1-t)^{\alpha }\right] t\left( U(t)\right)
^{2q}\left\vert f^{\prime }(b)\right\vert ^{q}%
\end{array}%
\right) dt\right] ^{\frac{1}{q}}  \label{2.22}
\end{equation}
\end{proof}

\begin{corollary}
When $\alpha =1$ and $g(x)=1$ is taken in Corollary 3, we obtain:

\begin{equation}
\left\vert \left( \frac{f(a)+f(b)}{2}\right) -\frac{ab}{(b-a)}%
\int\limits_{a}^{b}\frac{f(x)}{x^{2}}dx\right\vert  \label{2.23}
\end{equation}

\begin{equation*}
\leq \frac{\left( b-a\right) }{2^{2+\frac{1}{q}}\left( ab\right) }\left[
C_{1}\left( 1,q\right) \left\vert f^{\prime }(a)\right\vert ^{q}+C_{2}\left(
1,q\right) \left\vert f^{\prime }(H)\right\vert ^{q}+C_{3}\left( 1,q\right)
\left\vert f^{\prime }(b)\right\vert ^{q}\right] ^{\frac{1}{q}}
\end{equation*}
\end{corollary}

This proof is complete.
\end{corollary}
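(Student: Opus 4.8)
The plan is to specialize the fractional inequality (\ref{2.20}) of Corollary 3 to the parameters $\alpha=1$ and $g\equiv1$, and then to simplify both sides. First I would record the effect of these two choices on the ingredients of (\ref{2.20}). Taking $g(x)=1$ gives $\left\Vert g\right\Vert_{\infty}=1$ and $(g\circ\varphi)(x)=1$, while taking $\alpha=1$ collapses the Hadamard fractional operators into ordinary integrals: since $\Gamma(1)=1$ and $(x-t)^{0}=1$, one has $J_{1/b^{+}}^{1}F(1/a)=\int_{1/b}^{1/a}F(x)\,dx$ and likewise $J_{1/a^{-}}^{1}F(1/b)=\int_{1/b}^{1/a}F(x)\,dx$.

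Next I would evaluate the two bracketed groups of integrals appearing on the left of (\ref{2.20}). For $g\circ\varphi\equiv1$ each Hadamard integral is merely the length of $[1/b,1/a]$, so
\[
J_{1/b^{+}}^{1}(g\circ\varphi)(1/a)+J_{1/a^{-}}^{1}(g\circ\varphi)(1/b)=\frac{2(b-a)}{ab}.
\]
For the term involving $fg\circ\varphi$ I would use the substitution $u=1/x$ (recall $\varphi(x)=1/x$, so $dx=-u^{-2}\,du$ with the endpoints reversing), which turns $\int_{1/b}^{1/a}f(1/x)\,dx$ into $\int_{a}^{b}f(u)u^{-2}\,du$; hence both Hadamard integrals of $fg\circ\varphi$ equal $\int_{a}^{b}f(x)x^{-2}\,dx$ and their sum is $2\int_{a}^{b}f(x)x^{-2}\,dx$. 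Substituting these evaluations into the left-hand side of (\ref{2.20}) and pulling out the common factor yields
\[
\frac{2(b-a)}{ab}\left\vert \frac{f(a)+f(b)}{2}-\frac{ab}{b-a}\int_{a}^{b}\frac{f(x)}{x^{2}}\,dx\right\vert,
\]
whose bracketed quantity is exactly the left-hand side of the target inequality (\ref{2.23}).

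It then remains to simplify the right side of (\ref{2.20}) at $\alpha=1$ and to divide the whole inequality by the factor $2(b-a)/(ab)$ just extracted. The prefactor $\tfrac{(b-a)^{\alpha+1}}{2^{\alpha+1}(ab)^{\alpha+1}\Gamma(\alpha+1)}$ becomes $\tfrac{(b-a)^{2}}{4(ab)^{2}}$ because $\Gamma(2)=1$; the power-mean factor $\bigl(\tfrac{2^{2}(2^{\alpha}-1)}{\alpha+1}\bigr)^{1-1/q}$ becomes $2^{1-1/q}$; and the coefficients $C_{i}(\alpha,q)$ reduce to $C_{i}(1,q)$ via $(1+t)^{1}-(1-t)^{1}=2t$. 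Multiplying this specialized right-hand side by $ab/\bigl(2(b-a)\bigr)$ to undo the extraction and collecting the powers of $2$ gives $\tfrac{(b-a)}{8ab}\cdot 2^{1-1/q}=\tfrac{b-a}{2^{2+1/q}\,ab}$, which is precisely the displayed constant, completing the identification with (\ref{2.23}).

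I expect the only delicate point to be the constant bookkeeping — in particular, keeping track of the factor $2(b-a)/(ab)$ that multiplies $(f(a)+f(b))/2$ on the left of (\ref{2.20}) but is absent from the mean-type left side of (\ref{2.23}), and verifying that the powers of $2$ coming from $\Gamma(2)=1$, from the $\alpha=1$ value of the power-mean factor, and from the exponent $1-1/q$ combine correctly into $2^{2+1/q}$. The orientation reversal in the substitution $u=1/x$, which also supplies the weight $x^{-2}$, is routine but must be handled with the correct sign.
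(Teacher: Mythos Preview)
Your proposal is correct and follows exactly the approach the paper indicates: the paper merely asserts that setting $\alpha=1$ and $g(x)=1$ in Corollary~3 yields (\ref{2.23}) without writing out any of the arithmetic, and you have supplied precisely that arithmetic (the reduction of the fractional integrals to ordinary integrals, the change of variable $u=1/x$, and the collection of constants into $2^{2+1/q}$). There is no substantive difference in strategy; your write-up simply makes explicit the bookkeeping that the paper leaves to the reader.
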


\end{document}